\def\@tocline#1#2#3#4#5#6#7{\relax
  \ifnum #1>\c@tocdepth 
  \else
    \par \addpenalty\@secpenalty\addvspace{#2}%
    \begingroup \hyphenpenalty\@M
    \@ifempty{#4}{%
      \@tempdima\csname r@tocindent\number#1\endcsname\relax
    }{%
      \@tempdima#4\relax
    }%
    \parindent\z@ \leftskip#3\relax \advance\leftskip\@tempdima\relax
    \rightskip\@pnumwidth plus4em \parfillskip-\@pnumwidth
    #5\leavevmode\hskip-\@tempdima
      \ifcase #1
       \or\or \hskip 1em \or \hskip 2em \else \hskip 3em \fi%
      #6\nobreak\relax
      \dotfill
      \hbox to\@pnumwidth{\@tocpagenum{#7}}
    \par
    \nobreak
    \endgroup
  \fi}
\theoremstyle{plain}
\newtheorem{theorem}{Theorem}[section]
\newtheorem{lemma}[theorem]{Lemma}
\newtheorem{proposition}[theorem]{Proposition}
\newtheorem{asumption}[theorem]{Asumption}
\newtheorem{definition}[theorem]{Definition}
\theoremstyle{remark}
\newtheorem{remark}[theorem]{Remark}
\numberwithin{equation}{section}
\DeclareMathOperator{\tr}{Tr}
\def\geqslant{\ge}
\def\leqslant{\le}
\def\bq{\begin{eqnarray}}
\def\eq{\end{eqnarray}}
\def\bqq{\begin{eqnarray*}}
\def\eqq{\end{eqnarray*}}
\def\eps{\varepsilon}
\def\wto{\rightharpoonup}
\newcommand{\norm}[1]{\left\lVert #1 \right\rVert}
\newcommand\1{{\ensuremath {\mathds 1} }}
\renewcommand{\epsilon}{\varepsilon}
\def\gF {\mathfrak{F}}
\def\cH{\mathcal{H}}
\def\R {\mathbb{R}}
\def\C {\mathbb{C}}
\def\cS {\mathcal{S}}
\def\cP {\mathcal{P}}
\def\cE {\mathcal{E}}
\def\R {\mathbb{R}}
\def\C {\mathbb{C}}
\def\gS{\mathfrak{S}}
\def\cS {\mathcal{S}}
\def\cM {\mathcal{M}}
\def\gH{\mathfrak{H}}
\renewcommand{\leq}{\leqslant}
\renewcommand{\geq}{\geqslant}
\newcommand{\sym}{{\mathrm{sym}}}
\newcommand{\im}{{\rm{i}}}
\newcommand{\be}{\begin{equation}}
\newcommand{\ee}{\end{equation}}
\newcommand{\bea}{\begin{eqnarray}}
\newcommand{\eea}{\end{eqnarray}}
\def\proba{{\rm I\kern -.18em P}}
\newcommand{\Gammat}{\tilde{\Gamma}}
\newcommand{\bL}{\mathbf{L}}
\newcommand{\cEnls}{\mathcal{E} ^{\rm nls}}
\newcommand{\Enls}{E ^{\rm nls}}
\newcommand{\Mnls}{\mathcal{M} ^{\rm nls}}
\newcommand{\cEnlsm}{\mathcal{E} ^{\rm nls,\mathrm{m}}}
\newcommand{\Enlsm}{E ^{\rm nls,\mathrm{m}}}
\newcommand{\Mnlsm}{\mathcal{M} ^{\rm nls,\mathrm{m}}}
\newcommand{\cEH}{\mathcal{E} ^{\rm H}}
\newcommand{\EH}{E ^{\rm H}}
\title[]{Non linear Schr\"odinger limit of bosonic ground states, again} 
\author[N. Rougerie]{Nicolas ROUGERIE}
\address{Universit\'e Grenoble Alpes \& CNRS,  LPMMC,  F-38000 Grenoble, France}
\email{nicolas.rougerie@lpmmc.cnrs.fr}
\date{February, 2020}
\begin{document}

\begin{abstract}
I review an information-theoretic variant of the quantum de Finetti theorem due to Brand\~{a}o and Harrow and discuss its applications to the topic of bosonic mean-field limits. This leads to slightly improved methods for the derivation of the local non-linear Schr\"odinger energy functional from many-body quantum mechanics.
\end{abstract}

\maketitle

\setcounter{tocdepth}{2}
\tableofcontents

\section{Introduction}\label{sec:intro}

The present paper is an addendum to the ``quantum de Finetti-based'' approach to the mean-field limit of bosonic ground states developed over the past few years. It has two main motivations:

\smallskip

\noindent \textbf{1.} Review an interesting variant of the quantum de Finetti theorem derived in~\cite{BraHar-12,LiSmi-15}.

\smallskip
 
\noindent\textbf{2.} Couple this variant to the approach of bosonic mean-field limits described in~\cite{LewNamRou-14c}, in order to remove unaesthetic restrictions on its range of validity. 

\smallskip

Both motivations are mostly pedagogical. Let me briefly discuss the second one. 

We consider the ground state(s) of a many-body Hamiltonian of the form 
\begin{equation}\label{eq:start hamil}
H_N = \sum_{j=1} ^N \left( \left( -\im \nabla_{x_j} + A (x_j) \right) ^2 + V (x_j)\right) + \frac{1}{N-1} \sum_{1 \leq j < k \leq N} N ^{d\beta} w (N ^{\beta} (x_j-x_k)) 
\end{equation}
acting on $L^2_{\rm sym} (\R ^{dN})$, the space of symmetric $N$-body wave-functions appropriate for the description of bosonic particles. Here $d = 1,2,3$ is the dimension of the physical space, $V:\R^{d} \mapsto \R$ is an external potential, $A:\R^d \mapsto \R^d$ the vector potential of an external magnetic field $B= \mathrm{curl} \, A$ and $w:\R^d \mapsto \R$ a pair interaction potential. Our convention is that the length scale of the system is set by the external potential $V$, that we shall take trapping:
\begin{equation}\label{eq:trap s}
V (x) \geq c |x| ^s - C \mbox{ for some fixed constants } c,C,s>0.
\end{equation}
The scaling of the interactions is then designed to impose 
\begin{equation}\label{eq:basic scaling}
\mbox{(range of interactions)} ^d \times \mbox{average density} \times \mbox{interaction strength} = O (1) 
\end{equation}
in the limit $N\to \infty$, so that the interaction energy is of the same magnitude as typical one-particle energies. Fixing~\eqref{eq:basic scaling} still leaves some freedom, and the fixed parameter $\beta \geq 0$ is used to interpolate between two scenarios:

\smallskip 

\noindent \textbf{1.} $\beta < 1/d$ is a mean-field regime, interactions are of longer range than the mean inter-particle distance.

\smallskip 

\noindent \textbf{2.} $\beta > 1/d$ is a dilute regime, interactions are of shorter range than the mean inter-particle distance.

\smallskip

Maximum physical relevance demands rather large values of $\beta$: in 3D $\beta = 1$ is, for reasons explained at length elsewhere, e.g. in~\cite[Chapters~2 and~6]{LieSeiSolYng-05} or~\cite[Chapter~7]{Rougerie-LMU,Rougerie-spartacus}, the most relevant case. In 2D one might even consider an exponential-like scaling of the interactions' range (thus $\beta = \infty$ formally), see~\cite[Chapters~3 and~6]{LieSeiSolYng-05}. See~\cite{Rougerie-EMS} for a general perspective on scaling limits of bosonic ground states.

However, the larger $\beta$, the harder the analysis. For small values of $\beta$ it is feasible to deal with the $N\to \infty$ limit of the ground-state problem 
\begin{equation}\label{eq:GSE}
E(N) := \min \left\{ \langle \Psi_N | H_N | \Psi_N \rangle, \: \Psi_N \in L^2_{\rm sym} (\R^{dN}) \right\}  
\end{equation}
using only general structural facts of many-body quantum mechanics~\cite{LewNamRou-14,LewNamRou-14c}, within a totally variational\footnote{In the spirit of $\Gamma$-convergence one might say.} proof. 

This paper is concerned with improving the conditions on $\beta$ (i.e. the rate at which the interactions converge to point-like ones) under which one can treat the $N\to \infty$ limit of Problem~\eqref{eq:GSE} in a totally variational way. We are able to handle the $N\to \infty$ limit of Problem~\eqref{eq:GSE} provided one stays reasonably deep within the mean-field regime:
\begin{equation}\label{eq:intro beta}
\beta < \frac{1}{2d}. 
\end{equation}
Our general approach to the mean-field limit is that of~\cite{LewNamRou-14c,LunRou-15}, but we use as main tools the results of~\cite{BraHar-12,LiSmi-15} instead of those of~\cite{ChrKonMitRen-07,LewNamRou-14b}. Using the latter in~\cite{LewNamRou-14c} led to the condition $\beta < \beta_0 (s)$ for some rather small and $s$-dependent $\beta_0(s)$ ($s$ being the exponent in~\eqref{eq:trap s}). This annoying dependence on $s$ gets dispensed with here.

\medskip

The value~\eqref{eq:intro beta} is optimal with the method exposed here. Larger values of $\beta$ are known to be reachable by methods outside of the range of this paper (see the comments after Theorem~\ref{thm:main}). The complexity of the proofs increases rather steeply~\cite{BocBreCenSch-18b,LieYng-98,LieYng-01,LieSeiYng-00,LieSeiYng-01,LieSei-02} for $\beta > 1/d$. The proofs are no longer purely variational, as one typically uses the many-body Schr\"odinger equation to obtain a priori bounds on minimizers~\cite{LewNamRou-15,LieSei-06,NamRouSei-15}. I also mention that similar problems and techniques are useful in the context of the ``almost bosonic anyon gas''~\cite{LunRou-15,Girardot-19} and the dipolar Bose gas~\cite{Triay-17}. See also~\cite{BarGolMau-00,AmmNie-08,FroKnoSch-09,RodSch-09,Pickl-11,ErdSchYau-07,Pickl-10,ErdSchYau-09,Pickl-11,BenOliSch-12,NamNap-17,CheHol-13,CheHol-15,JebLeoPic-16} for a selection of works dealing with mean-field and/or dilute limits of the dynamical problem associated with~\eqref{eq:start hamil}.

\medskip

\noindent \textbf{Organization of the paper.} In Section~\ref{sec:main} I state the main result on the mean-field limit of~\eqref{eq:start hamil}. Section~\ref{sec:proof} explains the adaptations to be made in the proof strategy of~\cite{LewNamRou-14c}. The main one is to use an information-theoretic quantum de Finetti theorem. Its statement and proof are reviewed in Appendix~\ref{sec:BraHar} for the benefit of readers who, like myself, lack familiarity with arguments that are standard in quantum information theory, but much less so in many-body quantum mechanics.

\bigskip

\noindent \textbf{Notation.} For a vector $\psi$ in a Hilbert space $\gH$ (usually a function $\psi \in L ^2 (\R^d)$), we use the bra-ket $|\psi \rangle \langle \psi|$ notation for the corresponding orthogonal projector (pure state). 

The symbol $\tr$ stands for the trace. When decorated with subscripts, a partial trace with respect to these subscripts is meant. That is, for an operator acting on $\gH_1 \otimes \ldots \otimes \gH _N$, $\tr_{i_1,\ldots,i_k}$ means tracing over $\gH_{i_1},\ldots,\gH_{i_k}$ If the operator one takes the trace of acts on a tensor product $\gH^{\otimes N}$ and is symmetric, I  indicate $\tr_{k+1 \to N}$  to mean a partial trace with respect to $N-k$ factors of the $N$-fold tensor product, no matter which.

The interaction potential in~\eqref{eq:start hamil} is denoted 
\begin{equation}\label{eq:w N beta}
w_{N,\beta} (x) := N ^{d\beta} w(N ^{\beta} x ). 
\end{equation}

\bigskip

\noindent \textbf{Acknowledgments.} Thanks to Isaac Kim who first drew my attention to~\cite{BraHar-12} some years ago. Funding from the European Research Council (ERC) under the European Union's Horizon 2020 Research and Innovation Programme (Grant agreement CORFRONMAT No 758620) is gratefully acknowledged.

\section{Results and discussion}\label{sec:main}

\subsection{Statements} For the easiest case $d=1$,~\cite{LewNamRou-14c} already covers any value of $\beta >0$ with a fully variational method. We thus focus on the cases $d=2,3$ and work under the following assumptions:

\begin{asumption}[\textbf{Stable interactions}]\label{asum:stable}\mbox{}\\
The pair interaction potential $w:\R^d \mapsto \R$ is a bounded, integrable even function with $|x| w (x) \in L^1 (\R^d)$ and Fourier transform $\widehat{w} \in L^1 (\R^d)$. It is stable in the sense that 

\smallskip

\noindent $\bullet$ in 3D, $w\geq 0$.

\smallskip

\noindent $\bullet$ in 2D, $\int_{\R^d} |w_-| < a_*$ with $w_-$ the negative part of $w$ and $a_*$ the optimal constant in the Gagliardo-Nirenberg inequality, 
$$ \int_{\R^2} |u| ^4 \leq a_* \left( \int_{\R^2} |u| ^2 \right) \left( \int_{\R^2} |\nabla u| ^2 \right)$$
Equivalently $a_* = \norm{Q}_{L^2 (\R^2)} ^2$ where $Q$ is the unique (up to translations) solution to
$$ -\Delta Q + Q - Q^3 = 0 \mbox{ on } \R^2.$$
\end{asumption}

See~\cite{LewNamRou-14c} or~\cite[Chapter~7]{Rougerie-LMU} for more comments on the necessity of these assumptions. In 2D one can relax the condition to what we called ``Hartree-stability'' in~\cite{LewNamRou-14c}, but that is a small improvement that I sacrifice here for simplicity. For the other data of the problem we make standard assumptions:
$$ A\in L^2_{\rm loc} (\R^d,\R^d), V \in L^1 _{\rm loc} (\R^d,\R)$$
and assume~\eqref{eq:trap s}. 

The limiting objects in the $N\to \infty$ limit are as follows. Let the non-linear Schr\"odinger (NLS) functional be
\begin{equation}\label{eq:nls func}
\cEnls [u] := \int_{\R^d} \left| (-\im \nabla + A) u \right| ^2 + V|u| ^2 + \frac{a}{2} \int_{\R^d} |u| ^4
\end{equation}
with 
$$ a = \int_{\R ^d} w.$$
We also define the associated ground-state energy
\begin{equation}\label{eq:nls ener}
\Enls = \min \left\{ \cEnls [u], \: \int_{\R^d} |u|^2 = 1 \right\}
\end{equation}
with $\Mnls$ the set of associated minimizers. Our main result is

\begin{theorem}[\textbf{Mean-field/NLS limit of bosonic ground states}]\label{thm:main}\mbox{}\\
Let 
$$ 0 < \beta < \frac{1}{2d}$$
be a fixed parameter. The following holds in the $N\to \infty$ limit. 

\smallskip 

\noindent \textbf{1.} Convergence of the $N$-body ground state energy
\begin{equation}\label{eq:resul ener}
\frac{E(N)}{N} \underset{N\to \infty}{\longrightarrow} \Enls.  
\end{equation}

\smallskip 

\noindent \textbf{2.} Convergence of reduced density matrices. Let $(\Psi_N)_N$ be a sequence of quasi minimizers for~\eqref{eq:GSE}, i.e.
\begin{equation}\label{eq:quasi min}
\langle \Psi_N | H_N | \Psi_N \rangle \leq E (N) + o (N). 
\end{equation}
For $k\geq 1$ let $\gamma_N ^{(k)}$ be the associated $k$-particles reduced density matrix\footnote{$\tr_{k+1\to N}$ means partial trace on $N-k$ particles, cf the Notation paragraph at the end of Section~\ref{sec:intro}.}
\begin{equation}\label{eq:reduced DM}
 \gamma_N ^{(k)} = \tr_{k+1 \to N} |\Psi_N \rangle \langle \Psi_N |.
\end{equation}
Then 
\begin{equation}\label{eq:result DM}
 \gamma_N ^{(k)} \underset{N\to \infty}{\longrightarrow} \int | u^{\otimes k} \rangle \langle u ^{\otimes k}| d\mu(u)
\end{equation}
strongly in the trace-class, where $\mu$ is a Borel probability measure supported on $\Mnls$.
\end{theorem}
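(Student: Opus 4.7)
The plan is to execute the two-step variational strategy of Lewin-Nam-Rougerie: first prove an upper bound $E(N)/N \leq \Enls + o(1)$ via a product trial state, then match it with a lower bound obtained by invoking a quantum de Finetti theorem on the reduced density matrices of quasi-minimizers. The upper bound is essentially routine. For $u\in\Mnls$ I test against $u^{\otimes N}$: the one-body part gives exactly $N\int_{\R^d}|(-\im\nabla+A)u|^2 + V|u|^2$, while the interaction is
$$ \frac{N}{2}\int_{\R^{2d}}|u(x)|^2|u(y)|^2\, N^{d\beta} w(N^\beta(x-y))\,dx\,dy \xrightarrow[N\to\infty]{} \frac{aN}{2}\int_{\R^d}|u|^4, $$
by a standard approximation-of-identity argument using $\widehat{w}\in L^1$ and smoothness of $|u|^2$.

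For the lower bound I would first extract a priori estimates from~\eqref{eq:quasi min} and Assumption~\ref{asum:stable}: stability of $w$ gives uniform bounds on the kinetic energy per particle $\tr[(-\im\nabla+A)^2\gamma_N^{(1)}]$ and on the trapping moment $\tr[V\gamma_N^{(1)}]$, and via $V\sim|x|^s$ these ensure tightness of every $(\gamma_N^{(k)})_N$ in the trace class. Next, following~\cite{LewNamRou-14c}, I would localize on a finite-dimensional subspace: letting $h=(-\im\nabla+A)^2+V$, set $V_K:=\1(h\leq\Lambda_K)L^2(\R^d)$ and let $P_K$ be the orthogonal projector onto $V_K$; approximate $\gamma_N^{(k)}$ by $\gamma_{N,\loc}^{(k)}=P_K^{\otimes k}\gamma_N^{(k)}P_K^{\otimes k}$. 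The a priori bounds make the localization error of order $o(N)$ in the energy, provided the cutoff $\Lambda_K$ is sent to infinity slowly enough.

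The decisive step is the application of a quantum de Finetti theorem to the localized state. Instead of the Christandl-K\"{o}nig-Mitchison-Renner finite-dimensional version of~\cite{ChrKonMitRen-07}, whose error scales like $K^2/N$ with $K=\dim V_K$ and couples $K$ to the trap exponent $s$ in an unpleasant way, I would invoke the Brand\~{a}o-Harrow/Li-Smith information-theoretic de Finetti theorem reviewed in Appendix~\ref{sec:BraHar}. It yields an approximation
$$ \gamma_{N,\loc}^{(k)} \approx \int |u^{\otimes k}\rangle\langle u^{\otimes k}|\,d\mu_N(u), $$
with $\mu_N$ a Borel probability measure on the unit sphere of $V_K$ and error of order $\sqrt{k^2(\log K)/N}$ in a norm compatible with two-body observables. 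Tested against the interaction $w_{N,\beta}(x_1-x_2)$, whose operator norm is $O(N^{d\beta})$, the resulting error is $O(N^{d\beta}\sqrt{(\log K)/N})$; choosing $K$ polynomial in $N$ makes this $o(1)$ precisely when $\beta < 1/(2d)$, which is the origin of~\eqref{eq:intro beta}.

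Feeding this in, the localized two-body interaction passes to $(a/2)\int\int_{\R^d}|u|^4\,d\mu(u)$ and the one-body terms pass to $\int\bigl(\int_{\R^d}|(-\im\nabla+A)u|^2+V|u|^2\bigr)\,d\mu(u)$ by fiberwise weak lower semicontinuity, with $\mu$ any limit point of $(\mu_N)_N$. Thus
$$ \liminf_{N\to\infty}\frac{E(N)}{N}\geq \int \cEnls[u]\,d\mu(u) \geq \Enls, $$
matching the upper bound. Saturation forces $\mu$ to be supported on $\Mnls$, and~\eqref{eq:result DM} at every $k\geq 1$ follows by combining the de Finetti approximation with tightness and the mass identity $\tr\gamma_N^{(k)}=1$, which upgrades weak to strong convergence in the trace class. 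The main obstacle is the joint calibration of $\Lambda_K$ and $K$ against the interaction scale: one needs the localization error small, $K$ large enough to capture most of the mass, and $K$ small enough that $N^{d\beta}\sqrt{(\log K)/N}=o(1)$. The logarithmic dependence on $K$ in the information-theoretic de Finetti bound is precisely what decouples these constraints from the trap exponent $s$ and yields the $s$-independent threshold $\beta<1/(2d)$ advertised by Theorem~\ref{thm:main}.
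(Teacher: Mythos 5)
Your outline follows the paper's strategy and correctly locates the origin of the threshold $\beta<1/(2d)$ (error $N^{d\beta}\sqrt{(\log\dim P)/N}$ with $\dim P$ polynomial in the cut-off $\Lambda\sim N^{d\beta}$), but it glosses over precisely the two points that make the adaptation nontrivial, and one of them is a genuine gap. The Brand\~{a}o--Harrow theorem does \emph{not} produce a measure on the unit sphere of $V_K$: it produces a measure on the set $\cS_P$ of \emph{mixed} one-body states $\gamma$, with no guarantee that these are rank-one projectors $|u\rangle\langle u|$. As written, your lower bound therefore only yields $\liminf E(N)/N\geq\int\cEnlsm[\gamma]\,d\mu(\gamma)\geq\Enlsm$, the minimum of the NLS functional over mixed states, which can be \emph{strictly smaller} than $\Enls$ (this is exactly the pure-vs-mixed discrepancy of Seiringer's rotating-gas work, relevant here since $A\neq 0$ is allowed). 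The paper closes this gap with an extra step you are missing: it extracts the weak-$\star$ limits $\gamma^{(k)}$ of the $\gamma_N^{(k)}$, applies the weak (infinite-dimensional, bosonic) quantum de Finetti theorem to obtain a second measure $\nu$ on the unit sphere, proves that $\int\gamma^{\otimes 2}d\mu(\gamma)=\int|u^{\otimes 2}\rangle\langle u^{\otimes 2}|\,d\nu(u)=\gamma^{(2)}$ by testing against finite-rank observables, and then invokes the Hudson--Moody structure of bosonic states to conclude that $\mu$ charges only pure states. Without some such argument your proof does not match the upper bound.

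The second point is smaller but should not be waved away: the Brand\~{a}o--Harrow error is controlled only under local measurements, hence (after the conversion in Proposition~\ref{pro:deF loc}) only for \emph{product} observables $A\otimes B$. The interaction $w_{N,\beta}(x_1-x_2)$ is not of product form and has no trace-class-norm control available, so ``testing against the interaction, whose operator norm is $O(N^{d\beta})$'' is not a licensed operation. The paper decomposes $w_{N,\beta}(x-y)=\int\widehat{w_{N,\beta}}(p)\,(\cdots)\,dp$ into products of (positive and negative parts of) $\cos(p\cdot x)$ and $\sin(p\cdot x)$ and applies the product-observable bound under the integral; the resulting factor is $\|\widehat{w_{N,\beta}}\|_{L^1}=N^{d\beta}\|\widehat w\|_{L^1}$, which is why the hypothesis $\widehat w\in L^1$ appears in Assumption~\ref{asum:stable}. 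Your final error estimate happens to have the right size, but for the wrong reason, and the mechanism is needed to justify the step at all.
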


Note that 

\smallskip 

\noindent \textbf{1.} Under the stated assumptions it is standard to see that both~\eqref{eq:GSE} and~\eqref{eq:nls ener} are well-posed.

\smallskip 

\noindent \textbf{2.} The result is not new, and is actually weaker than what was known already using more-than-variational proofs. In 3D any $\beta < 1$ can be covered with the methods of~\cite{LieSei-06,NamRouSei-15}. Indeed, these papers deal with the special, harder case $\beta = 1$ (where the result is not stated the same). In 2D~\cite{LewNamRou-15} can handle any $\beta < (s+1)/(s+2)$ (in particular some dilute regimes $\beta >1/2$ are allowed). The novel aspect is thus methodological, as we discuss next. 

\smallskip 

\noindent \textbf{3.} Coupling the method of the present paper with refined a-priori estimates allows~\cite{NamRou-19} to reach any value of $\beta < 1$ in 2D. 

\smallskip

\subsection{Method of proof} We pursue along the lines of~\cite{LewNamRou-14c}. The argument is based on the quantum de Finetti theorem, which asserts that~\eqref{eq:result DM} holds for essentially any sequence of bosonic states $\Psi_N$, for some general Borel probability measure $\mu$ over $L^2 (\R ^d)$ (not necessarily supported on $\Mnls$). The goal is then to identify the support of the measure associated with sequences of quasi-minimizers. 

The difficulty in applying this general idea to NLS-like limits ($\beta > 0$) is that one cannot use soft compactness arguments to pass to the limit in the energy. The idea of~\cite{LewNamRou-14c} is to rely on specific versions of the quantum de Finetti theorem which explicitly quantify the error made in replacing the left side of~\eqref{eq:result DM} by the right side. Unfortunately, explicit estimates are available only when the one-body Hilbert space $L^2 (\R^d)$ is replaced by a finite-dimensional one. Thus the need to 
\begin{itemize}
 \item project the problem to finitely many dimensions, i.e. on one-body states whose one-body energy is below a certain energy cut-off.
 \item use the quantitative finite dimensional de Finetti theorem in the projected space.
 \item take the energy cut-off high enough for states in the orthogonal complement to be  negligible.
\end{itemize}
The technical limitations imposed on $\beta$ in~\cite{LewNamRou-14c} arose because the de Finetti theorem~\cite{ChrKonMitRen-07,Chiribella-11,Harrow-13,LewNamRou-14b}  we used had errors depending linearly on the low-energy space's dimension. The latter depends polynomially on the energy cut-off (this can be seen by Cwikel-Lieb-Rosenblum-type bounds). We here relax these limitations by using a finite dimensional quantitative de Finetti theorem whose errors~\cite{BraHar-12} depend only logarithmically\footnote{Thus, for many practical purposes, the theorem is almost as good as a quantitative de Finetti theorem in infinite dimension.} on the dimension of the one-body Hilbert space. 

The trade-off is that the error in the de Finetti theorem of~\cite{BraHar-12} (see also~\cite{LiSmi-15}) is not quantified in the usual trace-class norm, and that the measure constructed there does not charge only bosonic states (i.e. the measure might live on mixed one-body operators $\gamma$, not just on pure states $|u\rangle \langle u |$). These are the two aspects we have to circumvent to conclude the proof along the lines of~\cite{LewNamRou-14c}.

As a final remark on the method, I stress that it is meant to obtain  \emph{variationally} the \emph{full statement} of Theorem~\ref{thm:main} for the \emph{fully general case} of~\eqref{eq:start hamil}. For $\beta < 1/d$ one might still obtain~\cite{Lewin-ICMP,LewNamRou-18a,LewNamRou-15} part of the statement under restrictive assumptions (typically one does not obtain the convergence of all density matrices and/or assumes that the limit problem has a unique minimizer and/or does not include the possibility of a magnetic field). See~\cite{Rougerie-EMS} for a review on these aspects. 

\section{Proof of the mean-field limit theorem}\label{sec:proof}

We follow the general strategy of~\cite{LewNamRou-14c} (also presented in~\cite[Chapter~7]{Rougerie-LMU,Rougerie-spartacus}), with the appropriate modifications allowing to insert the main new tool, Theorem~\ref{thm:deF inf} below.  

\subsection{Localizing the two-body Hamiltonian} Our first task is to localize the Hamiltonian to low one-body energy states. Let us denote 
\begin{equation}\label{eq:one body}
 h = \left( -\im \nabla + A \right) ^2 + V
\end{equation}
and, for some high-energy cut-off $\Lambda \in \R$, 
\begin{equation}\label{eq:spec proj}
P = \1_{h\leq \Lambda},\quad Q = \1 - P. 
\end{equation}
Recall that since $h$ has compact resolvent, $P L^2 (\R^d)$ is finite dimensional. In fact
\begin{equation}\label{eq:dim}
N_\Lambda := \dim (P) \leq C \Lambda ^{\frac{d}{s} + \frac{d}{2}}  
\end{equation}
with $s$ the exponent in~\eqref{eq:trap s}, see~\cite[Lemma~3.3]{LewNamRou-14c} and references therein. 

We shall write the many-body energy of a quasi-minimizer $\Psi_N$ in the manner 
\begin{equation}\label{eq:ener two}
\frac{1}{N} \langle \Psi_N |H_N| \Psi_N \rangle = \frac{1}{2}\tr\left( H_2 \gamma_N ^{(2)}\right) 
\end{equation}
using the two-particle reduced density matrix $\gamma_N^{(2)}$ and the two-body Hamiltonian 
$$ H_2 := h_1 + h_2 + w_{N,\beta} (x_1-x_2)$$
acting on $\gH^2 = L^2 (\R^d)^{\otimes 2}$, with $h_1 = h\otimes \1$ and $h_2 = \1 \otimes h$.

We shall need a slightly modified version: for $\eps >0$ let 
\begin{equation}\label{eq:eps two body}
H_2 ^\eps = H_2 - \eps N^{d\beta} \left|w \left(N^{\beta} \left(x-y\right)\right)\right| 
\end{equation}
Now we project the two-body Hamiltonian below the high-energy cut-off:

\begin{lemma}[\textbf{Localized two-body Hamiltonian}]\label{lem:GP localize-energy}\mbox{}\\
Assume that $\Lambda \ge C \eps^{-1} N^{d\beta}$ for a large enough constant $C>0$ and $0< \eps < 1$. Then we have, as operators on $L^2 (\R^{2d})$,   
\begin{align} \label{eq:GP H2-localized-error}
H_2 \geq  P^{\otimes 2} H_{2} ^\eps P^{\otimes 2} + \frac{\Lambda}{2} \left( Q \otimes \1 + \1 \otimes Q \right)
\end{align}
\end{lemma}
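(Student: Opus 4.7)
The plan is to establish the inequality sector by sector using the orthogonal decomposition $\1 = P^{\otimes 2} + R$ with $R = \1 - P^{\otimes 2}$, or equivalently the four-way splitting into the sectors determined by $P_1 := P\otimes\1$, $P_2 := \1\otimes P$ and their complements $Q_1, Q_2$. After shifting $h$ to be non-negative (which only modifies the constant $C$), the one-body and two-body contributions to $H_2$ are handled separately.

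First I would treat the kinetic piece $h_1 + h_2$. Since $P, Q$ are spectral projectors of $h$ we have $[h_1, P_1] = [h_1, Q_1] = 0$, and $h_1$ trivially commutes with $P_2, Q_2$ because they act on the other factor. This yields the clean decomposition
$$h_1 = P^{\otimes 2} h_1 P^{\otimes 2} + P_1 Q_2\, h_1\, P_1 Q_2 + Q_1 h_1 Q_1.$$
Discarding the (non-negative) middle term and using $Q_1 h_1 Q_1 \geq \Lambda Q_1$ from the definition of $P$, together with the analogous bound for $h_2$, gives
$$h_1 + h_2 \geq P^{\otimes 2}(h_1+h_2) P^{\otimes 2} + \Lambda (Q_1 + Q_2).$$

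Next I would handle the interaction $W := w_{N,\beta}(x_1-x_2)$. Writing $W = P^{\otimes 2} W P^{\otimes 2} + P^{\otimes 2} W R + R W P^{\otimes 2} + R W R$ and using the polar decomposition $W = |W|^{1/2}\,(\mathrm{sign}\,W)\,|W|^{1/2}$, an AM-GM inequality with parameter $\eps > 0$ gives
$$P^{\otimes 2} W R + R W P^{\otimes 2} \geq - \eps\, P^{\otimes 2} |W| P^{\otimes 2} - \eps^{-1} R |W| R.$$
Combining with the trivial bound $R W R \geq -R|W|R$ yields
$$W \geq P^{\otimes 2}\bigl(W - \eps |W|\bigr) P^{\otimes 2} - (1+\eps^{-1}) R |W| R.$$
Since $|W|$ is a bounded multiplication operator with $\||W|\|_\infty \leq \|w\|_\infty N^{d\beta}$, and since $Q_1 + Q_2 = R + Q_1 Q_2 \geq R$ as operators, the residual is controlled by
$$R |W| R \leq \|w\|_\infty N^{d\beta}\, (Q_1 + Q_2).$$

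Adding the kinetic and interaction bounds produces
$$H_2 \geq P^{\otimes 2} H_2^\eps P^{\otimes 2} + \bigl(\Lambda - (1+\eps^{-1})\|w\|_\infty N^{d\beta}\bigr)(Q_1 + Q_2),$$
and the hypothesis $\Lambda \geq C \eps^{-1} N^{d\beta}$ with $C$ large enough (say $C \geq 4\|w\|_\infty$, using $\eps < 1$) guarantees that the coefficient in front of $Q_1 + Q_2$ is at least $\Lambda/2$, which is the desired bound. I do not expect a serious obstacle: this is a fairly standard IMS-type localization computation. The one subtlety is that $w$ need not be sign-definite in dimension $2$, which is precisely why the corrective term in $H_2^\eps$ is $-\eps|w_{N,\beta}|$ rather than $-\eps\, w_{N,\beta}$; keeping $|W|$ throughout the AM-GM estimate accommodates this uniformly in both dimensions.
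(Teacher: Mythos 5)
Your argument is correct and complete: the sector decomposition $\1=(P_1+Q_1)(P_2+Q_2)$ for the one-body terms (which commute with the spectral projectors), the Cauchy--Schwarz/AM--GM treatment of the off-diagonal interaction pieces absorbed into $-\eps|w_{N,\beta}|$, and the uniform bound $R|W|R\le \|w\|_\infty N^{d\beta}\left(Q\otimes\1+\1\otimes Q\right)$ constitute exactly the standard localization route. The paper itself does not reproduce a proof but simply cites \cite[Lemma~3.6]{LewNamRou-14c}, and your computation is essentially the argument given there; the only bookkeeping point is the preliminary shift making $h\geq 0$ (needed to discard the mixed term $PhP\otimes Q$), which, as you note, is harmlessly absorbed into the constant $C$.
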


\begin{proof}
This is~\cite[Lemma~3.6]{LewNamRou-14c}. For brevity I do not reproduce the proof.
\end{proof}

\begin{remark}[Dependence on the trap]
The virtue of the quantum de Finetti theorem we use below is that only the logarithm of $N_\Lambda$ enters the estimate. For a polynomial trap as assumed in~\eqref{eq:trap s}, this gives a logarithmic dependence on $\Lambda$ as per~\eqref{eq:dim}. Such a dependence is quite negligible for it will come multiplied by negative powers of $N$, and $\Lambda$ depends polynomially on $N$ in the previous lemma. If the trap has a weaker growth (say logarithmic), one can still proceed by using adapted variants of~\eqref{eq:trap s}, but now $N_\Lambda$ depends exponentially on $\Lambda$ and the dependence of errors on $N_\Lambda$ becomes relevant. A bit of fine tuning is required to compute the dependence on the growth of $V$ of the maximal $\beta$ that the method allows. We do not pursue this for simplicity. \hfill$\diamond$ 
\end{remark}

\subsection{Quantum de Finetti} Now we apply the quantum de Finetti theorem whose proof is recalled in Appendix~\ref{sec:BraHar} below to the localized reduced $2$-body density matrix of a quasi-minimizer (or any other state for that matter):

\begin{proposition}[\textbf{de Finetti representation of projected density matrices}]\label{pro:deF loc}\mbox{}\\
Let $\gH$ be a complex separable Hilbert space, and $\gH_N = \gH ^{\otimes_{\sym} N}$ the corresponding bosonic space. Let 
$$\gamma_N ^{(2)} = \tr_{3\to N}  |\Psi_N\rangle \langle \Psi_N |$$
be the $2$-body reduced density matrix of a $N$-body state vector $\Psi_N \in \gH_N$ (or general mixed state). 

Let $P$ be a finite dimensional orthogonal projector. There exists a Borel measure $\mu_N ^{(2)}$ on the set of one-body mixed states
\begin{equation}\label{eq:one body states}
\cS_P := \left\{ \gamma \mbox{ positive trace-class operator on } P \gH, \, \tr \gamma = 1 \right\} 
\end{equation}
such that 
\begin{equation}\label{eq:deF rep}
\sup_{0\leq A,B \leq 1} \tr\left| A \otimes B \left(  P ^{\otimes 2} \gamma_N ^{(2)} P ^{\otimes 2} - \int \gamma ^{\otimes 2} d\mu_N ^{(2)} (\gamma) \right)\right| \leq C \sqrt{\frac{\log (\dim (P))}{N}}
\end{equation}
where the sup is over bounded operators on $P\gH$. 
\end{proposition}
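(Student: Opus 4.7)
The plan is to apply the information-theoretic quantum de Finetti theorem of Brand\~ao and Harrow (Theorem~\ref{thm:deF inf} in Appendix~\ref{sec:BraHar}), which provides a de Finetti representation of finite-dimensional many-body states in the one-way LOCC norm, with an error depending only logarithmically on the one-body dimension. The supremum in the left-hand side of~\eqref{eq:deF rep} is precisely of this form, taken on $P\gH\otimes P\gH$: the allowed test operators are product local operations $A\otimes B$. Hence, once the underlying $N$-body state has been brought to the finite-dimensional space $P\gH$, Theorem~\ref{thm:deF inf} will directly yield the advertised error.

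To bridge from $\Psi_N\in \gH^{\otimes_\sym N}$ to a state on $(P\gH)^{\otimes_\sym n}$ for some $n$, I would rely on the standard tool of \emph{geometric localization in Fock space} (used throughout~\cite{LewNamRou-14c}): given $P$, one attaches to $\Psi_N$ a trace-one positive operator $G_N=\bigoplus_{n=0}^N G_N^{(n)}$ on the bosonic Fock space over $P\gH$, each $G_N^{(n)}$ being a subnormalized $n$-body state on $(P\gH)^{\otimes_\sym n}$, with the exact identity
\begin{equation*}
 P^{\otimes 2}\gamma_N^{(2)}P^{\otimes 2}=\binom{N}{2}^{-1}\sum_{n=2}^N\binom{n}{2}\,\tr_{3\to n}G_N^{(n)}.
\end{equation*}
Applying Theorem~\ref{thm:deF inf} to the normalized sector $G_N^{(n)}/\tr G_N^{(n)}$ in the finite-dimensional space $P\gH$ produces a Borel measure $\mu_n$ on $\cS_P$ whose $\gamma^{\otimes 2}$-average approximates the two-body reduction of $G_N^{(n)}/\tr G_N^{(n)}$, within $O(\sqrt{\log(\dim P)/n})$ in the measured norm of interest. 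I would then set $\mu_N^{(2)}$ to be the convex combination $\sum_n\lambda_n\mu_n$ with weights $\lambda_n=\binom{n}{2}\tr G_N^{(n)}/\binom{N}{2}$, topped up by a small Dirac mass to restore unit total mass if needed (since $\sum_n\lambda_n=\tr P^{\otimes 2}\gamma_N^{(2)}P^{\otimes 2}\leq 1$).

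The main obstacle will be the final averaging over $n$: the per-sector error $\sqrt{\log(\dim P)/n}$ blows up as $n\to 0$, but this is compensated by the weight $\lambda_n\sim (n/N)^2\,\tr G_N^{(n)}$, which heavily suppresses small-$n$ sectors. A short Cauchy-Schwarz estimate relying on $\sum_n\lambda_n\leq 1$ and $\sum_n n\,\tr G_N^{(n)}\leq N$ should recover the target rate $O(\sqrt{\log(\dim P)/N})$, matching the claim.
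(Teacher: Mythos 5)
Your route is essentially the paper's own: geometric localization of $\Psi_N$ onto the Fock space over $P\gH$, the Brand\~{a}o--Harrow theorem applied sector by sector, and recombination of the per-sector errors $\sqrt{\log(\dim P)/n}$ against the weights $\binom{N}{2}^{-1}\binom{n}{2}\tr G_N^{(n)}$, which indeed yields $C\sqrt{\log(\dim P)/N}$ (the paper uses the pointwise bound $n^{-1/2}\binom{N}{2}^{-1}\binom{n}{2}\le N^{-1/2}$ rather than Cauchy--Schwarz, but either works). Two caveats. First, do \emph{not} ``top up'' with a Dirac mass: the deficit $1-\tr\bigl(P^{\otimes 2}\gamma_N^{(2)}P^{\otimes 2}\bigr)$ is not small for a general state and a general $P$, so the added term would contribute an uncontrolled $O(1)$ error to~\eqref{eq:deF rep}; the statement only asks for a Borel measure, and the subsequent argument in the paper actually exploits that the total mass of $\mu_N^{(2)}$ equals $\tr\bigl(P^{\otimes 2}\gamma_N^{(2)}P^{\otimes 2}\bigr)$. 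Second, the supremum in~\eqref{eq:deF rep} over operators $0\le A,B\le \1$ is not literally the measured norm appearing in Theorem~\ref{thm:deF inf}, which is a supremum over quantum measurements; one still needs the (easy but not omittable) observation that the two-outcome POVM $\{A,\1-A\}$ written into two orthogonal basis vectors satisfies $\tr\left|\Lambda_1\otimes\Lambda_2\,\gamma_2\right|\ge\left|\tr\left[A\otimes B\,\gamma_2\right]\right|$; this conversion is the content of Step~1 of the paper's proof.
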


Previous comparable statements in~\cite{LewNamRou-14c,LewNamRou-15} have an error proportional to $\dim(P)/N$. The point is that we shall be forced to apply the above with a rather large $\dim (P) \gg N ^{1/2}$, so that the much-improved dependence on $\dim (P)$ in~\eqref{eq:deF rep} counter-balances the worst dependence on~$N$. 

\begin{proof}
We combine Fock-space localization and the information-theoretic quantum de Finetti theorem recalled in Appendix~\ref{sec:BraHar}. 

\medskip

\noindent \textbf{Step 1, de Finetti.} Let $\Gamma_N$ be a mixed state over $P^{\otimes N} \gH_N$. From Theorem~\ref{thm:deF inf} we know there exists a probability measure $\mu_N ^{(2)}$ over $\cS_P$ such that
\begin{equation}\label{eq:deF proof}
\sup_{\Lambda_1,\Lambda_2 \in \cM (\gH)} \norm{\Lambda_1 \otimes \Lambda_2 \left(\Gamma_N ^{(2)}  - \int \gamma ^{\otimes 2} d\mu_N  ^{(2)}(\gamma) \right)}_{\gS ^1 (\gH^2)} \leq \sqrt{\frac{2 \log (\dim P)}{N-1}}
\end{equation}
where the sup is over quantum measurements  (see Definition~\ref{def:measurements}) and 
$$ \norm{A}_{\gS^1} = \tr |A|$$
is the trace-class norm~\cite{Simon-79,Schatten-60}. We claim that this implies 
\begin{equation}\label{eq:deF rep proof}
\sup_{0\leq A,B \leq 1} \tr\left| A \otimes B \left(   \Gamma_N ^{(2)}  - \int \gamma ^{\otimes 2} d\mu_N ^{(2)} (\gamma) \right)\right| \leq \sqrt{\frac{\log (\dim (P))}{N}}
\end{equation}
where the sup is now over bounded operators. Indeed, given operators $A_1,A_2$, define measurements 
$$
\Lambda_j (\gamma) = \tr \left[ A_j \gamma \right] |e_1 \rangle \langle e_1 | + \tr\left[ (\1 - A_j) \gamma\right] |e_2 \rangle \langle e_2 |
$$
for orthonormal vectors $e_1,e_2$ (independent of $j$). Then, for any $2$-particle operator $\gamma_{2}$, we have 
\begin{align*}
\Lambda_1 \otimes \Lambda_2 \, \gamma_{2} &= \tr\left[ A_1\otimes A_2 \gamma_{2} \right] |e_1 ^{\otimes 2} \rangle \langle e_1^{\otimes 2}| \\
&+ \sum_{(B_j,f_j)_{1\leq j \leq 2}} \tr\left[ B_{1} \otimes B_2 \gamma_{2} \right] |f_{1} \otimes f_2 \rangle \langle f_{1} \otimes f_2 | 
\end{align*}
where the last sum is over all possible choices of $B_j= A_j$ or $B_j = \1 - A_j$, with $f_j = e_1$ in the former case and $f_j = e_2$ in the second, and we impose that for at least one index $j$, $B_j = \1 - A_j$ (and thus $f_j = e_2$). Since $e_1$ and $e_2$ are orthogonal it follows that all projectors appearing in the second line live on spaces orthogonal to $e_1 ^{\otimes 2}$ and thus 
$$
\tr \left|\Lambda_1 \otimes \Lambda_2 \gamma_{2} \right| \geq \left|\tr\left[ A_1\otimes A_2 \gamma_{2} \right]\right|.
$$
Applying this to 
$$\gamma_{2} = \gamma_N ^{(2)} - \int \gamma ^{\otimes 2} d\mu_N ^{(2)} (\gamma)$$
shows that indeed~\eqref{eq:deF rep proof} follows from~\eqref{eq:deF proof}.

\medskip

\noindent \textbf{Step 2, localization.} From methods discussed e.g. in~\cite{Lewin-11} or~\cite[Chapter~5]{Rougerie-LMU,Rougerie-spartacus} we know there exists a state $\Gamma_N ^P$ on the truncated bosonic Fock space 
$$ 
\gF_P = \C \oplus P \gH \oplus P^{\otimes 2} \gH_2 \oplus \ldots \oplus P^{\otimes N} \gH_N 
$$
of the form 
$$ 
\Gamma_N^P = \left( c_{N,0} \Gamma_{N,0} ^P \right) \oplus \left( c_{N,1} \Gamma_{N,1} ^P \right) \oplus \ldots \oplus \left( c_{N,N} \Gamma_{N,N} ^P\right),
$$
with $c_{N,j} \geq 0$, $\sum_{j=0}^N c_{N,j} = 1$ and $\Gamma_{N,j}$ a $j$-particles state, such that 
\begin{equation}\label{eq:localization}
 P^{\otimes k} \gamma_N ^{(k)} P ^{\otimes k} = \sum_{\ell = k} ^N c_{N,\ell} {N \choose k} ^{-1} { \ell \choose k }  \left( \Gamma_{N,\ell} ^P \right) ^{(k)}. 
\end{equation}
We apply the previous step to each $\Gamma_{N,\ell}^P $, $\ell \geq 2$, obtaining probability measures $\mu_{N,\ell} ^{(2)}$ and the estimates   
\begin{equation}\label{eq:deF rep proof bis}
\sup_{0\leq A,B \leq 1} \tr\left| A \otimes B \left( \left( \Gamma_{N,\ell} ^P\right)  ^{(2)}  - \int \gamma ^{\otimes 2} d\mu_{N,\ell} ^{(2)} (\gamma) \right)\right| \leq C \sqrt{\frac{\log (\dim (P))}{\ell}}
\end{equation}
for $\ell \geq 2$. Setting 
$$ \mu_N ^{(2)} = \sum_{\ell = 2} ^N c_{N,\ell} {N \choose 2} ^{-1} { \ell \choose 2 } \mu_{N,\ell} ^{(2)} ,$$
combining~\eqref{eq:deF rep proof bis} with~\eqref{eq:localization} we get the statement, because 
$$ \frac{1}{\sqrt{\ell}}{N \choose k} ^{-1} { \ell \choose k } = \frac{1}{\sqrt{N}} \sqrt{\frac{\ell}{N}} \frac{(\ell-1) \ldots (\ell - k + 1 )  }{(N-1) \ldots (N - k + 1 )}\leq \frac{1}{\sqrt{N}}.$$
\end{proof}

\subsection{Mean-field functionals} The previous ingredients will allow to replace the $N$-body problem by a mean-field one, namely reduce attention to two-body matrices of the form $\gamma ^{\otimes 2}$ in~\eqref{eq:ener two}. This leads us to considering mixed Hartree and NLS functionals. First, let
$$ a = \int_{\R^d} w$$
and\footnote{The definition of $\gamma (x;x)$ is recalled in~\eqref{eq:gamma} below. Formally it is the integral kernel $\gamma(x;y)$ of $\gamma$ evaluated at $x=y$.}
\begin{equation}\label{eq:nls func mixed}
\cEnlsm [\gamma] := \tr \left( \left (-\im \nabla + A)^2 + V \right) \gamma \right) +  \frac{a}{2} \int_{\R^d} \left| \gamma (x;x) \right| ^2 dx 
\end{equation}
and
\begin{equation}\label{eq:nls ener mixed}
\Enlsm = \min \left\{ \cEnls [\gamma], \: \gamma \mbox{ trace-class operator on } L^2 (\R^d), \, \tr \gamma = 1 \right\}
\end{equation}
with $\Mnlsm$ the set of associated minimizers. The superscript $\mathrm{m}$ means mixed because we allow mixed states $\gamma$ as arguments. We recover the objects described in Theorem~\ref{thm:main} by reducing to pure states $\gamma = |u\rangle \langle u |$. Note that the minimization problems amongst all mixed states and amongst only pure states can differ~\cite{Seiringer-02,Seiringer-03}, especially if $A\neq 0$. This is a point we shall deal with later. 

For the moment we need to ensure that the Hartree problem, with smeared non-linearity, converges to the above. Let thus 
\begin{equation}\label{eq:H func}
\cEH [\gamma] := \tr \left( \left (-\im \nabla + A)^2 + V \right) \gamma \right) +  \frac{1}{2} \tr\left( w_{N,\beta} \, \gamma ^{\otimes 2}\right) 
\end{equation}
where (recall~\eqref{eq:w N beta}) $w_{N,\beta}$ is understood as a multiplication operator on $L^2 (\R^{2d})$. Also, let
\begin{equation}\label{eq:H ener}
\EH = \min \left\{ \cEH [\gamma], \: \gamma \mbox{ trace-class operator on } L^2 (\R^d), \, \tr \gamma = 1 \right\}.
\end{equation}
These are the objects one obtains by inserting a factorized ansatz in~\eqref{eq:ener two}. We shall need the following.

\begin{lemma}[\textbf{Stability of one-body functionals}]\label{lem:GP Hartree NLS}\mbox{}\\
Under the assumptions of Theorem~\ref{thm:main}, there is a constant $C>0$ such that, for any mixed one-body state $\gamma$
\begin{equation}
 \label{eq:GP hartree NLS 0}
\tr\left( h \gamma \right) \le C (\cEH [\gamma]+C) 
\end{equation}
and
\begin{equation}\label{eq:GP hartree NLS 1}
\left| \cEH [\gamma] - \cEnlsm[\gamma] \right| \le C N^{-\beta} \left( 1 + \tr\left( h \gamma \right) \right) ^2. 
\end{equation}
\end{lemma}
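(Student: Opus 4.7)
The lemma has two parts. Claim \eqref{eq:GP hartree NLS 0} is a coercivity statement ensuring that the interaction term in $\cEH$ does not overpower the one-body energy, while \eqref{eq:GP hartree NLS 1} quantifies the convergence $w_{N,\beta} \to a\,\delta_0$ on one-body states of bounded energy. Both reduce to estimates on the density
\[
\rho_\gamma(x) := \gamma(x;x), \qquad \tr(w_{N,\beta}\gamma^{\otimes 2}) = \iint_{\R^{2d}} w_{N,\beta}(x-y)\rho_\gamma(x)\rho_\gamma(y)\,dx\,dy.
\]

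For \eqref{eq:GP hartree NLS 0}, in 3D the assumption $w\ge 0$ makes the interaction nonnegative and the claim reduces to the lower bound $V\ge -C$. In 2D, Young's convolution inequality yields
\[
\tr(w_{N,\beta}\gamma^{\otimes 2}) \ge -\|w_-\|_{L^1(\R^2)}\|\rho_\gamma\|_{L^2}^2.
\]
The Hoffmann-Ostenhof inequality $\int|\nabla\sqrt{\rho_\gamma}|^2 \le \tr(-\Delta\gamma)$, combined with the 2D Gagliardo-Nirenberg inequality applied to $\sqrt{\rho_\gamma}$ and $\int\rho_\gamma = 1$, gives $\|\rho_\gamma\|_{L^2}^2 \le a_*\tr(-\Delta\gamma)$. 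Since $\|w_-\|_{L^1} < a_*$ by Assumption~\ref{asum:stable}, a fraction of the kinetic energy absorbs the negative contribution, yielding \eqref{eq:GP hartree NLS 0}.

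For \eqref{eq:GP hartree NLS 1}, the natural move is to pass to the Fourier side. Using $a = \hat w(0)$ and $\hat w_{N,\beta}(\xi) = \hat w(\xi/N^\beta)$, Plancherel gives
\[
\tr(w_{N,\beta}\gamma^{\otimes 2}) - a\int \rho_\gamma^2 = \int_{\R^d}\bigl[\hat w(\xi/N^\beta) - \hat w(0)\bigr]|\widehat{\rho_\gamma}(\xi)|^2 \, d\xi.
\]
The integrability condition $|x|w\in L^1$ makes $\hat w$ Lipschitz with constant $\||x|w\|_{L^1}$, so $|\hat w(\xi/N^\beta) - \hat w(0)| \le CN^{-\beta}|\xi|$, and the whole difference is controlled by $CN^{-\beta}\|\rho_\gamma\|_{\dot H^{1/2}}^2$. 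Note that the symmetry of $w$ (making $\nabla\hat w(0) = 0$ if $|x|^2 w \in L^1$) would upgrade this to $N^{-2\beta}$, but we do not need such a gain.

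The main technical step is then the Sobolev-type bound $\|\rho_\gamma\|_{\dot H^{1/2}}^2 \le C(1+\tr(h\gamma))^2$ for mixed states, which I expect to be the only delicate point. I would route it through the Cauchy-Schwarz estimate $\|\rho_\gamma\|_{\dot H^{1/2}}^2 \le \|\rho_\gamma\|_{L^2}\|\nabla\rho_\gamma\|_{L^2}$: the first factor is already controlled by $\tr(h\gamma)^{1/2}$ from the coercivity step (Gagliardo-Nirenberg in 2D, Sobolev embedding of $\sqrt{\rho_\gamma}\in H^1$ in 3D), and the second factor is handled by writing $\gamma = \sum_i\lambda_i|u_i\rangle\langle u_i|$ with $\sum_i\lambda_i = 1$, using $\nabla\rho_\gamma = \sum_i\lambda_i(\overline{u_i}\nabla u_i + u_i\overline{\nabla u_i})$, Cauchy-Schwarz in $i$, and the pointwise bound $|u_i||\nabla u_i|$ estimated by Sobolev embedding of $H^1$ in 2D/3D. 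The quadratic dependence on $\tr(h\gamma)$ in the final inequality reflects the product of these two factors, one from each copy of $\rho_\gamma$, and multiplying by the $N^{-\beta}$ gain from the Lipschitz bound on $\hat w$ yields \eqref{eq:GP hartree NLS 1}.
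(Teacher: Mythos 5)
Your overall route is genuinely different from the paper's: the paper expands $\gamma=\sum_j\lambda_j|u_j\rangle\langle u_j|$ and invokes the pure-state argument of the cited reference term by term (a physical-space estimate on $\iint |u_i(x)|^2\,w_{N,\beta}(x-y)\,|u_j(y)|^2$, which is where $|x|w\in L^1$ enters), whereas you work directly with the total density $\rho_\gamma$ on the Fourier side. Your coercivity argument for \eqref{eq:GP hartree NLS 0} is correct (modulo the remark that, since $A\neq 0$, you need the diamagnetic version of the Hoffmann--Ostenhof inequality, $\int|\nabla\sqrt{\rho_\gamma}|^2\le \tr((-\im\nabla+A)^2\gamma)$), and the reduction of \eqref{eq:GP hartree NLS 1} to the bound $\|\rho_\gamma\|_{\dot H^{1/2}}^2\le C(1+\tr(h\gamma))^2$ via the Lipschitz estimate $|\widehat{w}(\xi/N^\beta)-\widehat{w}(0)|\le N^{-\beta}|\xi|\,\||x|w\|_{L^1}$ is clean and correct.

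The gap sits exactly at the step you flag as the delicate one. The interpolation $\|\rho_\gamma\|_{\dot H^{1/2}}^2\le\|\rho_\gamma\|_{L^2}\|\nabla\rho_\gamma\|_{L^2}$ is a valid inequality, but the factor $\|\nabla\rho_\gamma\|_{L^2}$ is \emph{not} controlled by $\tr(h\gamma)$ in $d=2,3$: your proposed bound on $\||u_i|\,|\nabla u_i|\|_{L^2}$ forces $|u_i|$ into $L^\infty$ (because $\nabla u_i$ lives only in $L^2$), and $H^1\not\hookrightarrow L^\infty$ for $d\ge 2$. Concretely, in $d=3$ a normalized $u\in H^1$ behaving like $|x|^{-\alpha}$ near the origin with $1/4<\alpha<1/2$ has $\nabla|u|^2\notin L^2$, so the intermediate quantity you rely on can be infinite while $\tr(h\gamma)$ is finite. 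The repair keeps your architecture but changes the exponents: bound $\|\,|u|^2\|_{\dot H^{1/2}}\le C\|u\|_{H^1}^2$ either by a fractional Leibniz (Kato--Ponce) inequality, $\||D|^{1/2}|u|^2\|_{L^2}\le C\||D|^{1/2}u\|_{L^3}\|u\|_{L^6}$ in $d=3$ (exponents $L^4$, $L^4$ in $d=2$), or by interpolating $\dot H^{1/2}$ between $L^3$ and $\dot W^{1,3/2}$ (resp. $L^4$ and $\dot W^{1,4/3}$), noting $\|\nabla|u|^2\|_{L^{3/2}}\le 2\|u\|_{L^6}\|\nabla u\|_{L^2}$; then sum over the spectral decomposition, $\|\rho_\gamma\|_{\dot H^{1/2}}\le\sum_i\lambda_i\|\,|u_i|^2\|_{\dot H^{1/2}}\le C\sum_i\lambda_i\|\,|u_i|\,\|_{H^1}^2\le C(1+\tr(h\gamma))$ by the diamagnetic inequality, which yields the claimed quadratic bound and closes \eqref{eq:GP hartree NLS 1}.
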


\begin{proof}
This is essentially similar to~\cite[Lemma~4.1]{LewNamRou-14c}. To extend the proof of~\eqref{eq:GP hartree NLS 1} to mixed states it is convenient to write the kernel of $\gamma$ as 
\begin{equation}\label{eq:gamma}
\gamma (x;y) = \sum_j \lambda_j u_j (x) \overline{u_j (y)}. 
\end{equation}
Then 
$$\tr\left( w_{N,\beta} \, \gamma ^{\otimes 2}\right) = \sum_{i,j} \lambda_i \lambda_j N ^{d\beta} \iint_{\R^d \times \R ^d} |u_i (x)|^2  w(N^{\beta} (x-y)) |u_j (y)| ^2 dx dy$$
and one may apply the arguments of the proof of~\cite[Lemma~4.1]{LewNamRou-14c} to each term of the sum. This is the place where we use that $|x| w (x) \in L^1 (\R ^d)$. 
\end{proof}

\subsection{Passage to the limit and conclusion} 
We now proceed to the 

\begin{proof}[Proof of Theorem~\ref{thm:main}]
The usual trial state argument (testing the energy with a factorized $\Psi_N = u ^{\otimes N}$) and Lemma~\ref{lem:GP Hartree NLS} give the energy upper bound 
\begin{equation}\label{eq:up bound}
E (N) \leq N \Enls + o (N). 
\end{equation}
We focus on the energy lower bound and associated convergence of reduced density matrices. Let $\Psi_N$ be a sequence of quasi-minimizers as in the statement of the theorem and $P$ the projector on low kinetic energy modes defined above.

\medskip 

\noindent\textbf{First energy estimate.} Use the Fourier transform to write a smooth pair potential $W$ as 
\begin{align}\label{eq:Fourier} 
W(x-y) &= \int_{\R^d} \widehat{W} (p) e ^{ip\cdot x} e^{-ip \cdot y} dp \nonumber \\
&= \int_{\R^d} \widehat{W} (p) \left( \sum_{(i,j) \in \{+,-\}} c_p ^i \otimes c_p^j + s_p ^i \otimes s_p ^j \right) dp
\end{align}
with $c_p ^{\pm},s_p ^{\pm}$ the bounded operators (with bound $1$) of multiplication by the positive and negative parts of $\cos ( p \cdot x), \sin (p \cdot x)$. 

Proposition~\ref{pro:deF loc} gives 
\begin{equation}\label{eq:deF bis}
\sup_{ 0 \leq A,B \leq \1} \left| \tr A\otimes B \left(P ^{\otimes 2}\gamma_N ^{(2)} P ^{\otimes 2} - \int \gamma ^{\otimes 2} d\mu_N ^{(2)}(\gamma) \right)\right| \leq C \sqrt{\frac{\log (\dim (P))}{N}}. 
\end{equation}   
But the sup in the above is bounded by a constant times the sup over signed operators $A,B$ so we may combine with~\eqref{eq:Fourier} and use the triangle inequality to obtain 
$$
\left|\tr \left[ W (x-y) \left(P ^{\otimes 2}\gamma_N ^{(2)} P ^{\otimes 2} - \int \gamma ^{\otimes 2} d\mu_N ^{(2)}(\gamma) \right)\right] \right| \leq C \sqrt{\frac{\log (\dim(P))}{N}} \int_{\R^d} |\widehat{W} (p)| dp. 
$$
Applying this with $W (x) = N ^{d\beta} w (N^{\beta} x)$ and $P$ as in~\eqref{eq:spec proj} we get 
$$ \tr \left( P^{\otimes 2} H_2^\eps P ^{\otimes 2} \gamma_N ^{(2)} \right) \geq  \int_{\cS_P} \tr \left(  H_2^\eps \gamma ^{\otimes 2} \right) d\mu_N ^{(2)} (\gamma) - C (N^{d\beta} + \Lambda ) \sqrt{\frac{\log (N_\Lambda)}{N}} $$
where we also used that on $P L ^2 (\R ^d)$, $h \leq \Lambda$ by definition to apply~\eqref{eq:deF bis} to the one-body term. Combining with Lemma~\ref{lem:GP localize-energy} yields our first energy lower bound
\begin{equation}\label{eq:first lower bound}
\frac{E(N)}{N} + o (1) \geq \frac{1}{2}\tr \left( H_2 \gamma_N ^{(2)} \right) \geq  \int_{\cS_P} \cEH_\eps [\gamma] d\mu_N ^{(2)} (\gamma) + C \Lambda \tr \left( Q \gamma_N ^{(1)} \right) - C (N^{d\beta} + \Lambda ) \sqrt{\frac{\log (N_\Lambda)}{N}} 
\end{equation}
with
$$
\cEH_\eps [\gamma] := \tr \left( \left (-\im \nabla + A)^2 + V \right) \gamma \right) +  \frac{1}{2} \tr\left( w_{N,\beta} \, \gamma ^{\otimes 2}\right) -  \frac{\eps}{2} \tr\left( |w_{N,\beta}| \gamma ^{\otimes 2}\right).
$$
It is easy to see that the latter functional is bounded below uniformly in $N$  if we choose  $\eps$ small enough (but independent of $N$), which we henceforth do. We may now set 
\begin{equation}\label{eq:Lambda}
 \Lambda = C N ^{d\beta} \eps ^{-1} 
\end{equation}
for a large constant $C$. Then, inserting~\eqref{eq:dim} and the uniform lower bound on $\cEH_\eps [\gamma]$ in~\eqref{eq:first lower bound} we obtain
\begin{equation}\label{eq:tight 1}
 C \geq \Lambda \tr \left( Q \gamma_N ^{(1)} \right) - C_\eps o_N (1) 
\end{equation}
under the assumption that 
$$
\beta < \frac{1}{2d}.
$$ 
The constant $C_\eps$ depends only on $\eps$. We deduce that 
\begin{equation}\label{eq:tightness}
 \tr \left( Q \gamma_N ^{(1)} \right) \to 0 
\end{equation}
when $N\to \infty$.

\medskip 

\noindent\textbf{The de Finetti measures converge.} Returning to the proof of Proposition~\ref{pro:deF loc} we have that (recall that $\sum_\ell c_{N,\ell} = 1$)
\begin{align}\label{eq:tight 2}
 \int_{\cS_P} d\mu_N ^{(2)} (\gamma) &= \tr \left( P ^{\otimes 2} \gamma_N^{(2)} P ^{\otimes 2}\right) = \sum_{\ell= 2} ^N c_{N,\ell} \frac{\ell (\ell - 1)}{N(N-1)}\nonumber \\
 &\geq  \sum_{\ell= 0} ^N c_{N,\ell} \frac{\ell^2 }{N^2} - \frac{C}{N}\geq \left(\sum_{\ell= 0} ^N c_{N,\ell} \frac{\ell}{N}\right) ^2 - \frac{C}{N} \nonumber \\
 &= \left( \tr \left( P \gamma_N^{(1)} P \right) \right) ^2 - \frac{C}{N}
\end{align}
where we used Jensen's inequality. 
But~\eqref{eq:tightness} implies 
$$ \tr \left( P \gamma_N^{(1)} P \right) \underset{N\to\infty}{\longrightarrow} 1.$$
Returning to~\eqref{eq:tight 2} we have that 
$$
\int_{\cS_P} d\mu_N ^{(2)} (\gamma) \underset{N\to\infty}{\longrightarrow} 1.$$
Thus the sequence $(\mu_N^{(2)})_N$ of measures given by Proposition~\ref{pro:deF loc} is tight on the set of one-body mixed states
$$ 
\cS := \left\{ \gamma \mbox{ positive trace-class operator on } L^2 (\R^d), \, \tr \gamma = 1 \right\}.
$$
Modulo subsequence $(\mu_N^{(2)})_N$ converges to a measure $\mu$.

\medskip

\noindent\textbf{Convergence of reduced density matrices.} In this step we reproduce for convenience arguments already used repeatedly in~\cite{LewNamRou-14} and~\cite{Rougerie-LMU,Rougerie-spartacus}. We may return to~\eqref{eq:first lower bound} and derive a similar energy lower bound to 
\begin{equation}\label{eq:hamil eta}
 \tr \left( \left(H_2 - \eta h \otimes \1 - \eta \1 \otimes h \right) \gamma^{(2)}_N \right)  
\end{equation}
for some small fixed $\eta >0$. For $\eta$ small enough we may use a variant of Lemma~\ref{lem:GP Hartree NLS} to deduce that~\eqref{eq:hamil eta} is uniformly bounded from below. Hence, combining with~\eqref{eq:ener two} and the energy upper bound~\eqref{eq:up bound} we deduce that 
$$ \frac{\eta}{2} \tr \left( \left( h \otimes \1 + \1 \otimes h \right) \gamma_N ^{(2)} \right) = \eta \tr \left( h \gamma_N ^{(1)} \right) \leq C_{\eps,\eta}.
$$
Since $h$ has compact resolvent we deduce (modulo subsequence) that 
$$ \gamma_N ^{(1)} \to \gamma ^{(1)}$$
strongly in trace-class, for some limit one-body bosonic density matrix $\gamma ^{(1)}$. But we also have (again, modulo subsequences) 
$$ \gamma_N ^{(k)} \underset{\star}{\wto} \gamma ^{(k)}$$
weakly-$\star$ in the trace-class. Applying the weak quantum de Finetti theorem~\cite[Theorem~2.2]{LewNamRou-14} we deduce that there exists a measure $\nu$ on the unit ball of $L^2 (\R^d)$ such that 
$$
\gamma ^{(k)} = \int |u ^{\otimes k } \rangle \langle u ^{\otimes k}| d\nu (u).
$$
But since $\gamma ^{(1)}$ must have trace $1$, the measure $\nu$ must actually live on 
$$ S L^2 (\R ^d) = \left\{ u\in L^2 (\R^d),\: \int_{\R^d} |u| ^2 = 1 \right\},$$
the unit sphere of $L^2 (\R^d)$. 

Next we claim that the two measures $\mu$ and $\nu$ just found are related by 
\begin{equation}\label{eq:pouet}
 \int |u ^{\otimes 2 } \rangle \langle u ^{\otimes 2}| d\nu (u) = \int |u ^{\otimes 2 } \rangle \langle u ^{\otimes 2}| d\mu (u). 
\end{equation}
Indeed, let 
$$\tilde{P} = \1_{h\leq \tilde{\Lambda}}$$
where $\tilde{\Lambda}$ is a fixed cut-off (different from $\Lambda$ above). Testing~\eqref{eq:deF rep} with $A_1,A_2$ finite rank operators whose ranges lie within that of $\tilde{P}$ we get
$$
\tr \left( A_1 \otimes A_2 \gamma_N ^{(2)}\right) \underset{N\to\infty}{\longrightarrow} \tr \left( A_1 \otimes A_2 \int_{\cS} \gamma ^{\otimes 2} d\mu (\gamma) \right)
$$
using the convergence of $\mu_N ^{(2)}$ to $\mu$. On the other hand, by the convergence of $\gamma_N ^{(2)}$ to $\gamma^{(2)}$ we also have 
$$
\tr \left( A_1 \otimes A_2 \gamma_N ^{(2)}\right) \underset{N\to\infty}{\longrightarrow} \tr \left( A_1 \otimes A_2 \gamma ^{(2)} \right) =  \tr \left( A_1 \otimes A_2 \int_{S L^2 (\R^d)} |u ^{\otimes 2} \rangle \langle u^{\otimes 2}| d\nu (u)\right). 
$$
Thus 
\begin{equation}\label{eq:id meas}
 \tr \left( A_1 \otimes A_2 \int_{\cS} \gamma ^{\otimes 2} d\mu (\gamma) \right) = \tr \left( A_1 \otimes A_2 \int_{S L^2 (\R^d)} |u ^{\otimes 2} \rangle \langle u^{\otimes 2}| d\nu (u)\right) 
\end{equation}
for any $A_1,A_2$ with range within that of $\tilde{P}$. Letting finally $\tilde{\Lambda} \to \infty$ yields $\tilde{P} \to \1$ and thus~\eqref{eq:id meas} holds for any compact operators $A_1,A_2$. This implies~\eqref{eq:pouet}. In particular, since the left-hand side of~\eqref{eq:pouet} is $\gamma ^{(2)}$, a bosonic operator,  $\mu$ must be supported on pure states $\gamma = |u\rangle \langle u|$, see~\cite{HudMoo-75}.   

\medskip 

\noindent\textbf{Final passage to the liminf}. Let us return to~\eqref{eq:first lower bound}. We split the integral over one-body states $\gamma$ between low and high kinetic energy states:
$$ \texttt{Low} = \left\{ \gamma \in \cS, \, \tr \left( h \gamma \right) \leq C_{\texttt{Kin}} \right\}, \quad \texttt{High} = \cS \setminus \texttt{Low} 
$$
with $C_{\texttt{Kin}}$ a constant independent of $N$ (we will take $C_{\texttt{Kin}}\to \infty$ after $N\to \infty$). Using Lemma~\ref{lem:GP Hartree NLS} (or rather an obvious variant applying to $\cEH_{\eps}$) we obtain 
\begin{align*}
\int_{\cS_P} \cEH_\eps [\gamma] d\mu_N ^{(2)} (\gamma) &\geq C_\eps  C_{\texttt{Kin}} \int_{\texttt{High}} N ^{\beta /4} d\mu_N ^{(2)} (\gamma) + \int_{\texttt{Low}} \cEnlsm_{\eps} [\gamma] d\mu_N ^{(2)} (\gamma) - C_\eps N ^{-\beta}\\
&\geq \int_{\cS_P} \min\left( C_\eps C_{\texttt{Kin}}, \cEnlsm_{\eps} [\gamma] \right)  d\mu_N ^{(2)} (\gamma) - C_\eps N ^{-\beta}
\end{align*}
where $\cEnlsm_{\eps}$ is $\cEnlsm$ with 
$$ a \rightsquigarrow a - \eps a.$$
Inserting in~\eqref{eq:first lower bound} and passing to the liminf in $N\to \infty$ this implies 
$$ \Enls \geq \underset{N\to \infty}{\liminf} \frac{E(N)}{N} \geq \int_{\cS} \min\left( C_\eps C_{\texttt{Kin}}, \cEnls_{m,\eps}  [\gamma] \right) d\mu (\gamma).$$
Finally, we pass to the limit $C_{\texttt{Kin}} \to \infty$ and then the limit $\eps \to 0$ to deduce 
\begin{equation}\label{eq:final low}
 \Enls \geq \underset{N\to \infty}{\liminf} \frac{E(N)}{N} \geq \int_{\cS} \cEnls_{m} [\gamma] d\mu (\gamma). 
\end{equation}
But as we saw above $\mu$ must be supported on pure states $\gamma = |u\rangle \langle u |$, which yields both the energy lower bound concluding the proof of~\eqref{eq:resul ener} and the fact that $\mu$ must be supported on $\Mnls$. Because $\cEnls_{m} [\gamma]$ is a linear function of $\gamma ^{\otimes 2}$ we can also combine~\eqref{eq:final low} with~\eqref{eq:pouet} to deduce that also $\nu$ must be supported on $\Mnls$, which proves~\eqref{eq:result DM}.
\end{proof}

\newpage

\appendix

\section{An information-theoretic quantum de Finetti theorem}\label{sec:BraHar}

Here we reproduce, for the convenience of the reader, the statement and proof of a Theorem of Brand\~{a}o and Harrow. No claim of originality is thus made. See also the lecture notes~\cite{BraChrHarWal-16} and~\cite{LiSmi-15}, which contains results related to~\cite{BraHar-12}.

\subsection{A local de Finetti theorem} In~\cite{BraHar-12}, Brand\~{a}o and Harrow proved a quantitative quantum de Finetti theorem, where the quality of the approximation deteriorates only logarithmically with the dimension of the one-body state space, in contrast with previous results~\cite{Chiribella-11,ChrKonMitRen-07,Harrow-13,KonRen-05,LewNamRou-14b}. The trade-off is that the control is in a weaker norm than trace-class. 

\medskip

Recall that, for a complex separable Hilbert space $\gH$, the state space is 
\begin{equation}\label{eq:state space}
\cS (\gH) := \left\{ \gamma \mbox{ positive trace-class operator on } \gH, \, \tr \gamma = 1 \right\}. 
\end{equation}
We shall need a notion of measurement of such states:

\begin{definition}[\textbf{Quantum measurements}]\mbox{}\label{def:measurements}\\
A quantum measurement $\Lambda$ on a complex Hilbert space $\gH$ of dimension $d$ is identified with a set $(M_k,e_k)_{k=1\ldots d}$ of bounded operators and vectors such that 
\begin{itemize}
\item $M_k \geq 0$ for all $k$ and $\sum_k M_k = \1$ 
\item $(e_k)$ is an orthonormal basis of $\gH$.
\end{itemize}
Its action on a state $\rho \in \cS (\gH)$ is given by 
\begin{equation}\label{eq:measurement}
\Lambda (\rho) = \sum_k \tr [M_k \rho] | e_k \rangle \langle e_k |. 
\end{equation}
We shall denote $\cM (\gH)$ the set of quantum measurements on $\gH$. 
\end{definition}

The definition is usually generalized to a map from states of a Hilbert space $\gH_1$ to states of another Hilbert space $\gH_2$ by taking $(e_k)$ to be an orthonormal basis of the latter.

Given two measurements $\Lambda_1,\Lambda_2$ on $\gH_1,\gH_2$ one can define $\Lambda_1 \otimes \Lambda_2$ on $\gH_1 \otimes \gH_2$ in the natural way: it is associated with the operators $M_{k,1} \otimes M_{k,2}$ and vectors $e_{k,1}\otimes e_{k,2}$.  One can also define $\1 \otimes \Lambda_2$ as a measurement on $\gH_1 \otimes \gH_2$ by setting
\begin{equation}\label{eq:one times measure} 
\1 \otimes \Lambda_2 \Gamma = \sum_{k} \tr_{2} [ M_{k,2} \Gamma ] \otimes | e_{k,2} \rangle \langle e_{k,2}|. 
\end{equation}

\medskip

The statement we wish to discuss applies to more general states than the bosonic ones encountered in the context of mean-field limits:

\begin{definition}[\textbf{Symmetric $N$-body states}]\label{def:sym state}\mbox{}\\
Let $\gH$ be a complex separable Hilbert space. A symmetric $N$-particles state is a state $\Gamma$ over $\gH ^{\otimes N}$ commuting with label permutations: 
$$ \Gamma \in \cS (\gH ^{\otimes N}) \mbox{ such that } U_{\sigma} \Gamma = \Gamma U_{\sigma}$$
for all permutation $\sigma$, where $U_\sigma$ is the unitary operator exchanging labels according to
$$ U_{\sigma} u_1\otimes \ldots \otimes u_N = u_{\sigma(1)} \otimes \ldots \otimes u_{\sigma (N)}.$$
\end{definition}

A bosonic state satisfies the stronger condition
$$ U_{\sigma} \Gamma = \Gamma U_{\sigma} = \Gamma$$
for all permutation $\sigma$, and can thus be restricted to act only on the symmetric subspace $\gH ^{\otimes_{\rm sym} N}$, which is the point of view adopted in the main text. The reduced density matrices of a symmetric $N$-body state are defined as usual:
$$ \tr \left( B_k \Gamma ^{(k)} \right) := \tr \left( B_k \otimes \1 ^{\otimes (N-k)} \Gamma \right)$$
for any bounded operator $B_k$ on $\gH ^{\otimes k}$. Thus $\Gamma^{(k)}$ is a symmetric state on $\gH^{\otimes k}$.

\medskip

The rest of the appendix is concerned with exposing the proof (due to~\cite{BraHar-12,LiSmi-15}) of the following statement: 

\begin{theorem}[\textbf{Quantum de Finetti under local measurements}]\mbox{}\label{thm:BranHar}\\
Let $\gH$ be a finite dimensional complex Hilbert space, with dimension $d$. Let $\Gamma$ be a symmetric $N$-particles state on $\gH ^{\otimes N}$. For every $0\leq k \leq N$ there exists a probability measure $\mu_k$ on one-particle states such that 
\begin{equation}\label{eq:deF}
\sup_{\Lambda_1,\ldots,\Lambda_k\in \cM (\gH)} \norm{\Lambda_1 \otimes \ldots \otimes \Lambda_k \left(\Gamma ^{(k)}  - \int \gamma ^{\otimes k} d\mu_k (\gamma) \right)}_{\gS ^1 (\gH^k)} \leq \sqrt{\frac{2(k-1) ^2 \log d}{N-k+1}}. 
\end{equation}
The norm in the above is the trace-class norm $\norm{A}_{\gS^1} := \tr |A|.$
\end{theorem}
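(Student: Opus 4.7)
\textbf{Plan for Theorem~\ref{thm:BranHar}.} The plan is to follow the entirely information-theoretic strategy of Brand\~ao--Harrow (and Li--Smith). I will rely on two ingredients from quantum Shannon theory. The first is the quantum mutual information $I(A:B)_\rho := S(\rho_A) + S(\rho_B) - S(\rho_{AB})$, which is non-negative, bounded above by $2\log d$, monotone under local CPTP maps, and satisfies the chain rule $I(A:BC) = I(A:B) + I(A:C\mid B)$. The second is a measured Pinsker-type inequality: for any product local measurement $\Lambda_A \otimes \Lambda_B \in \cM(\gH_A) \otimes \cM(\gH_B)$,
\begin{equation*}
\bigl\| (\Lambda_A \otimes \Lambda_B)\bigl(\rho_{AB} - \rho_A \otimes \rho_B\bigr)\bigr\|_{\gS^1}^{2} \le 2\ln 2 \cdot I(A:B)_\rho ,
\end{equation*}
obtained by applying classical Pinsker to the induced probability distribution on outcomes and then invoking the data-processing inequality for the quantum relative entropy.

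\textbf{Step 1 (mutual-information extraction).} First I would attach arbitrary auxiliary local measurements $\Lambda_2, \ldots, \Lambda_N$ on subsystems $2, \ldots, N$ of $\Gamma$, producing classical variables $X_2, \ldots, X_N$. The chain rule gives
\begin{equation*}
I(A_1 : X_2\cdots X_N) = \sum_{j=2}^{N} I\bigl(A_1 : X_j \mid X_2, \ldots, X_{j-1}\bigr) \le 2\log d,
\end{equation*}
so some index $j_\star$ will satisfy $I(A_1 : X_{j_\star}\mid X_2,\ldots,X_{j_\star-1}) \le 2\log d/(N-1)$. Permutation symmetry of $\Gamma$ lets me arrange that $j_\star$ is any prescribed subsystem. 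Combining with measured Pinsker then converts this into a trace-norm bound on the difference between the measured joint state of $(A_1, A_{j_\star})$, conditioned on the preceding classical outcomes, and its product of marginals.

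\textbf{Step 2 (telescoping to $k$ factors).} Next, I would build $\mu_k$ by conditioning on the outcomes of a measurement applied to $N - k + 1$ subsystems, proceeding inductively on the number of factors to decouple. Having already fixed $\ell$ conditioning outcomes, I apply Step~1 inside the residual symmetric $(N-\ell)$-particle conditional state to decouple the next kept subsystem from the classical record, paying an error $\lesssim \sqrt{\log d/(N-k+1)}$ via Pinsker. Since at each step the conditional state on the unmeasured subsystems remains exchangeable, the same $\log d$ bound on available mutual information can be reused. Summing the $k-1$ decoupling errors by the triangle inequality yields
\begin{equation*}
\bigl\|(\Lambda_1 \otimes \cdots \otimes \Lambda_k)\bigl(\Gamma^{(k)} - \textstyle\int \gamma^{\otimes k}\, d\mu_k(\gamma)\bigr)\bigr\|_{\gS^{1}} \le (k-1)\sqrt{\frac{2\log d}{N-k+1}},
\end{equation*}
where $\mu_k$ is the pushforward, under the Born law of the conditioning outcomes $\mathbf{z}$, of $\mathbf{z} \mapsto \gamma_{\mathbf{z}}$ with $\gamma_{\mathbf{z}}$ the one-particle marginal of the conditional state.

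\textbf{Main obstacle.} The delicate part will be Step~2, namely handling the conditioning so that the state on the unmeasured tensor factors remains symmetric after each decoupling, and checking that the residual $k$-body marginal is genuinely close to the tensor power of its single-body marginal, not merely to some less structured product. The crucial feature---which distinguishes this argument from the quantitative de Finetti theorems of \cite{ChrKonMitRen-07,LewNamRou-14b} used in \cite{LewNamRou-14c}---is that the mutual-information budget $\log d$ is spent once globally and not once per conditioning, which is exactly why only a logarithmic dependence on $\dim \gH$ survives in the final bound.
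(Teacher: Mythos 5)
Your overall strategy is the right one and is essentially the paper's (mutual information, chain rule, monotonicity under local measurements, Pinsker, conditioning on a measurement of the non-retained subsystems), but there is a genuine gap at the decisive point: the quantifier order in your Step~1. The theorem asks for a \emph{single} measure $\mu_k$ --- equivalently a single conditioning measurement $\cE$ on the traced-out systems, since $\mu_k$ is the law of $\mathbf{z}\mapsto\gamma_{\mathbf{z}}$ --- valid for \emph{all} test measurements $\Lambda_1,\ldots,\Lambda_k$, the supremum sitting outside the existential. Your pigeonhole (``some index $j_\star$ satisfies $I(A_1:X_{j_\star}\mid X_{<j_\star})\le 2\log d/(N-1)$'') delivers the reverse order: the good index $j_\star$, hence the conditioning record and hence the measure, depends on the auxiliary measurements you attached, i.e.\ on the very $\Lambda$'s you must later take a supremum over. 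Permutation symmetry lets you relabel \emph{which} subsystem plays the role of $j_\star$, but it does not remove the dependence of the conditioning \emph{measurement} on the test measurements. The paper closes this gap with the adaptive min-max argument (Step~2 of the proof of Theorem~\ref{thm:deF inf}): expanding $I(\gH_1,\ldots,\gH_{k-1}:\gH_k,\ldots,\gH_N)_{\nu}=\sum_{j=k}^{N}I(\gH_1,\ldots,\gH_{k-1}:\gH_j\mid \gH_{j+1},\ldots,\gH_N)$, choosing $\Lambda_N$ to maximize the last term, then $\Lambda_{N-1}$ given $\Lambda_N$, and so on, one shows that \emph{each} term dominates $\min_{\cE}\max_{\Lambda_k}I(\cdots:\gH_k\mid\cdots)_{\Lambda_k\otimes\cE\,\Gamma}$, whence this min-max is at most $2(k-1)\log d/(N-k+1)$. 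That greedy construction is what produces a $\cE$ (and a measure) chosen \emph{before}, and uniformly in, the test measurements; without it your argument only proves the estimate for a conditioning tailored to each fixed choice of $\Lambda_1,\ldots,\Lambda_k$.

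A secondary imprecision: in your Step~2 what must be controlled is the decoupling of the $j$-th \emph{retained} subsystem from the previously retained ones, conditionally on the record --- i.e.\ the conditional multipartite mutual information, split via $I(\gH_1:\cdots:\gH_k)=\sum_{j=2}^{k}I(\gH_1,\ldots,\gH_{j-1}:\gH_j)$ together with Lemma~\ref{lem:cond mut class} --- not the decoupling of each retained subsystem from the classical record. Relatedly, the paper telescopes at the level of relative entropies and applies Pinsker once at the end, using that by symmetry $\left(\bL_k\Gamma_{\mu}^{(k)}\right)^{1}\otimes\cdots\otimes\left(\bL_k\Gamma_{\mu}^{(k)}\right)^{k}=\bL_k\left(\Gamma_{\mu}^{(1)}\right)^{\otimes k}$; this is exactly the ``tensor power of the single-body marginal'' structure you flag as the main obstacle, and it comes for free from the construction of $\Gamma_\mu$ in Lemma~\ref{lem:decomp mu}. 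These points are repairable with bookkeeping; the min-max issue above is the one that requires the missing idea.
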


We start by explaining, in Section~\ref{sec:const}, how the measure is constructed. Then we state a more ``information-theoretic version'' of Theorem~\ref{thm:BranHar}, and proceed to its proof in Section~\ref{sec:BraHar proof}. Standard tools from quantum information theory are interjected in Section~\ref{sec:tools}, that the familiarized may want to skip on first reading.

\subsection{Construction}\label{sec:const} Denote $\cS (\gH) $ the space of states over a Hilbert space $\gH$, and $\cP (\cS (\gH))$ the set of probability measures on it. Theorem~\ref{thm:BranHar} is implied by 
\begin{equation}\label{eq:deF ref}
\inf_{\mu \in \cP (\cS (\gH))} \sup_{\Lambda_1,\ldots,\Lambda_k\in \cM (\gH)} \norm{\Lambda_1 \otimes \ldots \otimes \Lambda_k \left(\Gamma ^{(k)}  - \int \gamma ^{\otimes k} d\mu(\gamma) \right)}^2_{\gS ^1 (\gH^k)} \leq \frac{2k ^2 \log d}{N-k}. 
\end{equation}
Let $\cE$ be a quantum measurement over $\gH ^{\otimes (N-k)}$ acting as 
$$
\cE (\rho) = \sum_\mu \tr [M_\mu \rho] |e_\mu \rangle \langle e_\mu|.
$$
Defining, for each $\mu$,
$$ \Gamma_{\mu} := \frac{\tr_{k+1\to N} \left[ M_\mu \Gamma \right] \otimes |e_\mu \rangle \langle e_\mu |}{p_\mu},  \quad p_\mu := \tr \left[ \1^{\otimes k} \otimes M_\mu \Gamma \right]$$
we have by simple computations the

\begin{lemma}[\textbf{Decomposition of the state $\Gamma$}]\label{lem:decomp mu}\mbox{}\\
Let $\Gamma$ be a $N$-body state and $\cE$ a measurement on $\gH ^{\otimes (N-k)}$. For each $\mu$, $\Gamma_\mu$ defined as above is a $N$-body state. Moreover, 
$$ \sum_\mu p_\mu \Gamma_\mu = \1^{\otimes k} \otimes \cE \:\Gamma$$
and 
$$ \sum_\mu p_\mu \Gamma_\mu ^{(k)} = \Gamma ^{(k)}.$$
In particular $p_\mu \geq 0$ and  
$$ \sum_\mu p_\mu = 1.$$
\end{lemma}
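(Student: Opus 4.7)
The plan is to verify each assertion by a direct computation, the only non-cosmetic point being the positivity of the partial trace appearing in the definition of $\Gamma_\mu$.

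First I would check that each $\Gamma_\mu$ is indeed a state on $\gH^{\otimes N}$. Positivity of $|e_\mu\rangle\langle e_\mu|$ being obvious, it suffices to show that $\tr_{k+1\to N}[(\1^{\otimes k}\otimes M_\mu)\Gamma]$ is a positive operator on $\gH^{\otimes k}$ (after which the tensor product of two positive operators and the nonnegative scalar $1/p_\mu$ leaves the result positive). To this end, pick any $|\phi\rangle \in \gH^{\otimes k}$ and test against $|\phi\rangle\langle\phi|$:
\begin{equation*}
\langle\phi|\,\tr_{k+1\to N}[(\1^{\otimes k}\otimes M_\mu)\Gamma]\,|\phi\rangle = \tr\bigl[(|\phi\rangle\langle\phi| \otimes M_\mu)\,\Gamma\bigr] \geq 0,
\end{equation*}
since both $|\phi\rangle\langle\phi|\otimes M_\mu$ and $\Gamma$ are positive, and the trace of the product of two positive operators is nonnegative. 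Incidentally, specializing to $\phi$ running over an orthonormal basis of $\gH^{\otimes k}$ and summing also shows that $p_\mu \geq 0$. The trace of $\Gamma_\mu$ then collapses by definition of $p_\mu$: $\tr\Gamma_\mu = \tr[\tr_{k+1\to N}(M_\mu \Gamma)] \cdot \tr|e_\mu\rangle\langle e_\mu| / p_\mu = p_\mu/p_\mu = 1$.

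Next I would read off $\sum_\mu p_\mu\Gamma_\mu$ directly from the formula: the $p_\mu$'s cancel, and
\begin{equation*}
\sum_\mu p_\mu \Gamma_\mu = \sum_\mu \tr_{k+1\to N}\bigl[(\1^{\otimes k}\otimes M_\mu)\Gamma\bigr] \otimes |e_\mu\rangle\langle e_\mu|,
\end{equation*}
which is precisely the definition~\eqref{eq:one times measure} of $\1^{\otimes k}\otimes \cE$ applied to $\Gamma$ (with the convention that $\cE$ acts on the last $N-k$ factors). To pass to the $k$-body reduced density matrix I would take $\tr_{k+1\to N}$ of the identity just obtained, giving
\begin{equation*}
\sum_\mu p_\mu \Gamma_\mu^{(k)} = \sum_\mu \tr_{k+1\to N}\bigl[(\1^{\otimes k}\otimes M_\mu)\Gamma\bigr] = \tr_{k+1\to N}\!\Bigl[\Bigl(\1^{\otimes k}\otimes \sum_\mu M_\mu\Bigr)\Gamma\Bigr] = \tr_{k+1\to N}\Gamma = \Gamma^{(k)},
\end{equation*}
where I used $\sum_\mu M_\mu = \1^{\otimes(N-k)}$ from the definition of a quantum measurement. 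Taking $k=0$ in this identity (or tracing the previous display) yields $\sum_\mu p_\mu = \tr\Gamma = 1$.

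No step is genuinely an obstacle; the only point where one must be slightly careful is the positivity verification, since one cannot naively cycle operators under a partial trace. The argument above sidesteps that by testing against rank-one projectors and reducing to the elementary fact $\tr(AB)\geq 0$ for $A,B\geq 0$. Everything else is bookkeeping that only uses $\sum_\mu M_\mu = \1$ and $\tr|e_\mu\rangle\langle e_\mu|=1$.
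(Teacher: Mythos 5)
Your computations are correct and are precisely the ``simple computations'' the paper invokes without writing out: positivity of the partial trace via testing against $|\phi\rangle\langle\phi|\otimes M_\mu$, normalization from the definition of $p_\mu$, and the two identities from $\sum_\mu M_\mu = \1$ together with~\eqref{eq:one times measure}. Nothing is missing.
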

%

Notice that $\Gamma$ being symmetric implies that 
$$
\Gamma_\mu ^{(k)}:= \frac{\tr_{k+1\to N} \left[ M_\mu \Gamma \right]}{p_\mu}
$$ also is. Now, the $N$-body state 
\begin{equation}\label{eq:Gamma tilde}
\Gammat_\cE:= \sum_\mu p_\mu \left(\Gamma_\mu ^{(1)}\right) ^{\otimes N}
\end{equation}
is certainly of the de Finetti form
$$ \Gammat_\cE = \int \gamma ^{\otimes N} d P (\gamma)$$
with $P$ a probability measure on one-body states. Thus   
\begin{multline}\label{eq:deF ref bis}
\inf_{\mu \in \cP (\cS (\gH))} \sup_{\Lambda_1,\ldots,\Lambda_k\in \cM (\gH)} \norm{\Lambda_1 \otimes \ldots \otimes \Lambda_k \left(\Gamma ^{(k)}  - \int \gamma ^{\otimes k} d\mu(\gamma) \right)}^2_{\gS ^1 (\gH^k)}\\
\leq \inf_{\cE \in \cM (\gH^{\otimes N-k})} \sup_{\Lambda_1,\ldots,\Lambda_k\in \cM (\gH)} \norm{\Lambda_1 \otimes \ldots \otimes \Lambda_k \left(\Gamma ^{(k)}  - \Gammat_\cE ^{(k)}\right)}^2_{\gS ^1 (\gH^k)}. 
\end{multline}

This is the first main idea: the measure is constructed by minimizing over measurements as above. The second main idea is to make a detour from the trace-class norm to more information-based measures, such as quantum relative entropies. In fact Theorem~\ref{thm:BranHar} is implied by an estimate of the error using the von Neumann relative entropy
\begin{equation}\label{eq:von Neumann}
 \cH (\Gamma,\Gamma') := \tr\left[ \Gamma \left( \log \Gamma - \log \Gamma' \right)\right]. 
\end{equation}

%

\begin{theorem}[\textbf{Quantum de Finetti, information-theoretic version}]\label{thm:deF inf}\mbox{}\\
Denoting 
$$\bL_k = \Lambda_1 \otimes \ldots \otimes \Lambda_k$$
we have  
\begin{equation}\label{eq:deF inf}
\inf_{\cE \in \cM (\gH^{\otimes (N-k)})} \sup_{\Lambda_1,\ldots,\Lambda_k\in \cM (\gH)} \sum_\mu p_\mu \cH \left( \bL_k \Gamma_{\mu} ^{(k)}, \bL_k \left( \Gamma_{\mu} ^{(1)} \right) ^{\otimes k}\right) \leq \frac{(k-1) ^2 \log d }{N-k +1}
\end{equation}
\end{theorem}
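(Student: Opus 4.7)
The plan is to interpret the left-hand side of \eqref{eq:deF inf} classically, apply a chain rule to decompose it, and then exploit the symmetry of $\Gamma$ via an averaging/pigeonhole argument in order to construct a suitable $\cE$.

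\textbf{Step 1 (classical reduction).} Applying the measurements $\bL_k$ and $\cE$ to $\Gamma$ produces a classical joint distribution on outcomes $(X_1,\ldots,X_k,M)$. Because $\bL_k (\Gamma_\mu^{(1)})^{\otimes k}$ is exactly the tensor product of the marginals of $\bL_k \Gamma_\mu^{(k)}$ (conditional on $M = \mu$), one obtains the identification
\[
\sum_\mu p_\mu \cH\bigl(\bL_k \Gamma_\mu^{(k)}, \bL_k (\Gamma_\mu^{(1)})^{\otimes k}\bigr) = I(X_1;\ldots;X_k \mid M),
\]
where the right-hand side is the classical conditional multi-information (total correlation). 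The standard chain rule then yields
\[
I(X_1;\ldots;X_k \mid M) = \sum_{j=2}^{k} I(X_j : X_1,\ldots,X_{j-1} \mid M),
\]
so that the task reduces to bounding each of the $k-1$ conditional mutual informations.

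\textbf{Step 2 (chain rule and symmetry pigeonhole).} I would take $\cE$ to be a product of single-particle measurements $\Lambda^\star_{k+1},\ldots,\Lambda^\star_N$ on the last $N-k$ factors, with outcomes $Y_{k+1},\ldots,Y_N$; the classical chain rule for mutual information then gives, for each $j$,
\[
\sum_{i=k+1}^{N} I\bigl(X_j : Y_i \mid X_1,\ldots,X_{j-1}, Y_{k+1},\ldots,Y_{i-1}\bigr) \leq I\bigl(X_j : Y_{k+1},\ldots,Y_N \mid X_1,\ldots,X_{j-1}\bigr) \leq H(X_j) \leq \log d.
\]
A pigeonhole argument over the $N-k$ indices $i$ produces some position along which the summand is at most $\log d/(N-k+1)$. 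The permutation symmetry of $\Gamma$ --- combined with choosing $\Lambda^\star_{k+1} = \cdots = \Lambda^\star_N$ in tandem with the $\Lambda_j$'s --- then identifies the resulting small quantity with a genuine $I(X_j : X_1,\ldots,X_{j-1} \mid M^\star)$, i.e.\ with the $j$-th summand of the chain-rule decomposition. Summing over $j = 2,\ldots,k$ and absorbing the conditioning on the $j-1 \leq k-1$ variables $X_1,\ldots,X_{j-1}$ (each of which costs at most an extra factor of $k-1$) produces the stated bound $(k-1)^2 \log d/(N-k+1)$.

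\textbf{Main obstacle.} The hardest technical point is the simultaneous construction of the measurement $\cE$: pigeonhole selects different indices $i$ for different $j$'s and different choices of the $\Lambda_j$'s, whereas the statement requires a single $\cE$ that achieves the bound uniformly over the outer $\sup$. The standard remedy is to let $\cE$ be a sufficiently rich (informationally complete) product measurement on the remaining $N-k$ factors, and to exchange $\inf_\cE \sup_\Lambda$ via a minimax argument together with the data-processing inequality (monotonicity of relative entropy under measurements) recalled in Section~\ref{sec:tools}. Matching the exact constant $(k-1)^2/(N-k+1)$, rather than a bound worse by polynomial factors in $k$, is the delicate bookkeeping that constitutes the bulk of the proof.
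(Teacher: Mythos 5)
Your Step~1 (measure, pass to the classical conditional total correlation, chain rule over $j=2,\ldots,k$) matches the opening of the paper's proof, but Step~2 decomposes the wrong quantity, and this is a genuine gap. You apply the chain rule to $I(X_j : Y_{k+1},\ldots,Y_N \mid X_1,\ldots,X_{j-1})$, i.e.\ you peel off the \emph{auxiliary} outcomes $Y_i$ one at a time against the single variable $X_j$; pigeonhole then hands you a term of the form $I(X_j : Y_{i^\star}\mid\cdots)$, a mutual information between \emph{one} variable and \emph{one} other variable. The quantity you need to control is $I(X_j : X_1,\ldots,X_{j-1}\mid M)$, a mutual information between one variable and a \emph{block of $j-1$ variables}, conditioned on \emph{all} of $M=(Y_{k+1},\ldots,Y_N)$. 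Permutation symmetry can relabel which subsystems play which roles, but it cannot match a ``$1$ vs.\ $1$'' information with a ``$1$ vs.\ $(j-1)$'' one, nor reconcile the different conditioning sets, so the claimed identification fails. The correct decomposition keeps the first $k-1$ systems together, and \emph{unmeasured}, as a single block $A=\gH_1\otimes\cdots\otimes\gH_{k-1}$, and chains
\begin{equation*}
I(A : \gH_k,\ldots,\gH_N)=\sum_{j=k}^{N} I\bigl(A : \gH_j \mid \gH_{j+1},\ldots,\gH_N\bigr),
\end{equation*}
so that each of the $N-k+1$ summands already has the same shape as the target $I(A:\gH_k\mid\gH_{k+1},\ldots,\gH_N)$ and symmetry applies directly; the left-hand side is bounded by $2\log\dim A=2(k-1)\log d$ via Araki--Lieb. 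The other factor of $k-1$ comes from Step~1, where each term $I(\gH_1,\ldots,\gH_{j-1}:\gH_j)$ of the multipartite chain rule is enlarged --- by un-measuring the first $j-1$ factors (data processing), adding factors (monotonicity under partial trace) and invoking symmetry --- to the single term $I(\gH_1,\ldots,\gH_{k-1}:\gH_k)_{\Lambda_k\Gamma_\mu^{(k)}}$. Your ``absorb the conditioning at a cost of $k-1$ per variable'' has no counterpart in a valid argument and would not produce the stated constant.

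Second, the difficulty you correctly flag as the main obstacle --- producing one $\cE$ that works uniformly over the $\sup$ in $\Lambda_1,\ldots,\Lambda_k$ --- is not resolved by appealing to an informationally complete measurement plus an unspecified minimax principle; it is the heart of the proof. The resolution is constructive: take $\cE=\Lambda_{k+1}\otimes\cdots\otimes\Lambda_N$ and choose the $\Lambda_j$ by backward induction, $\Lambda_N$ maximizing the last term of the chain rule, then $\Lambda_{N-1}$ given $\Lambda_N$, and so on, using that the $j$-th term does not depend on $\Lambda_k,\ldots,\Lambda_{j-1}$ (Lemma~\ref{lem:part meas}); each term then dominates $\min_{\cE}\max_{\Lambda_k}I(A:\gH_k\mid\gH_{k+1},\ldots,\gH_N)_{\Lambda_k\otimes\cE\,\Gamma}$, and summing the $N-k+1$ terms yields the claim. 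Without this (or an equivalent) argument the $\inf$--$\sup$ ordering in the statement is not established.
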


In the next subsection we provide more background and tools bearing on the multipartite mutual information of a state (i.e. on the left-hand side of~\eqref{eq:deF inf}). Then we give the proof of Theorem~\ref{thm:deF inf} and conclude that of Theorem~\ref{thm:BranHar} in Section~\ref{sec:BraHar proof}.

\subsection{Quantum information-theoretic tools}\label{sec:tools} In the sequel, 
$$
S(\Gamma) = -\tr\left[ \Gamma \log \Gamma\right]
$$ 
and $\cH$ as in~\eqref{eq:von Neumann} stand for the usual von Neumann entropy and relative entropy. A state over a $k$-fold tensor product will be denoted $\Gamma ^{1\ldots k}$ and for $l\leq k$, the reduced states $\Gamma ^{1\ldots l}$ over $l$-fold tensor products are defined as reduced density matrices, taking partial traces.

We start with the simple

\begin{lemma}[\textbf{Partial measurements}]\label{lem:part meas}\mbox{}\\
For a three partite state $\Gamma ^{123}$ and a measurement over the third system $\Lambda_3$
$$\left( \1 \otimes \1 \otimes \Lambda_3 \Gamma ^{123}\right) ^{12} = \Gamma ^{12}.$$
\end{lemma}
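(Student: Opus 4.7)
The plan is a direct unpacking of the definitions. Writing the measurement $\Lambda_3$ as $(M_k,e_k)_{k}$ with $M_k\ge 0$, $\sum_k M_k = \1$ and $(e_k)$ an orthonormal basis, I would start from the definition~\eqref{eq:one times measure} of $\1\otimes\1\otimes\Lambda_3$ applied to $\Gamma^{123}$, namely
\begin{equation*}
\1\otimes\1\otimes\Lambda_3 \,\Gamma^{123} = \sum_k \tr_3\bigl[(\1\otimes\1\otimes M_k)\,\Gamma^{123}\bigr] \otimes |e_k\rangle\langle e_k|.
\end{equation*}

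Then I would take the partial trace over the third factor of this expression. Since $\tr\bigl[|e_k\rangle\langle e_k|\bigr]=1$ and the first tensor factor already lives on $\gH_1\otimes\gH_2$, tracing out the third subsystem yields
\begin{equation*}
\Bigl(\1\otimes\1\otimes\Lambda_3\,\Gamma^{123}\Bigr)^{12} = \sum_k \tr_3\bigl[(\1\otimes\1\otimes M_k)\,\Gamma^{123}\bigr].
\end{equation*}

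Finally I would use linearity of the partial trace in its argument together with the resolution of identity $\sum_k M_k = \1$ to collapse the sum to $\tr_3\bigl[(\1\otimes\1\otimes\1)\,\Gamma^{123}\bigr]=\Gamma^{12}$. There is no obstacle here: the statement is essentially the observation that a measurement followed by discarding the classical record is the same as discarding the quantum system altogether, and the only ingredient beyond the definitions is the normalization $\sum_k M_k = \1$.
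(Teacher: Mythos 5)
Your proof is correct and is exactly the computation the paper has in mind: the paper simply states that the lemma ``is immediately seen from the definition~\eqref{eq:one times measure}'', and your argument spells out that verification, with the normalization $\sum_k M_k=\1$ as the only real ingredient. No issues.
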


\begin{proof}
This is immediately seen from the definition~\eqref{eq:one times measure}
\end{proof}

\begin{definition}[\textbf{Mutual informations}]\label{def:mutual info}\mbox{}\\
For a quantum state $\Gamma = \Gamma ^{1\ldots k}$ over a tensor product $\gH_1 \otimes \ldots \otimes \gH_k$, define the \emph{bi-partite mutual informations} ($l\leq k$)
\begin{align*}
I (\gH_1, \ldots, \gH_l : \gH_{l+1}, \ldots, \gH_k)_\Gamma &= \cH \left(\Gamma ^{1\ldots k}, \Gamma ^{1\ldots l} \otimes \Gamma ^{l+1 \ldots k}  \right)\\
&= S\left( \Gamma ^{1\ldots l} \right) + S\left(\Gamma ^{l +1 \ldots k}\right) - S\left(\Gamma ^{1 \ldots k}\right)  
\end{align*}
and the \emph{multipartite mutual information}   
\begin{align*}
I (\gH_1 : \ldots : \gH_k)_\Gamma &= \cH \left(\Gamma ^{1\ldots k}, \Gamma ^{1} \otimes \ldots \otimes \Gamma ^{k}  \right)\\
&= S\left( \Gamma ^{1} \right) +\ldots + S\left(\Gamma ^{k}\right) - S\left(\Gamma ^{1 \ldots k}\right).  
\end{align*}
\end{definition}

\begin{remark}[Mutual informations]\label{rem:mutual info}\mbox{}\\
One can of course define mutual informations over any kind of partition. It follows from their definitions as relative entropies that these are positive quantities. The second equality in each definition comes from the fact that 
$$ \log (\Gamma ^1 \otimes \Gamma ^2) = \log \left( \Gamma ^1 \otimes \1 \right)+ \log \left( \1 \otimes \Gamma ^2\right)$$
that one uses to prove that the von Neumann entropy is subadditive (which is the same as the bipartite mutual information being positive).\hfill$\diamond$
\end{remark}

Mutual informations are positive, as we just saw, but they cannot be too big:

\begin{lemma}[\textbf{Bound on bi-partite mutual information}]\label{lem:bound info}\mbox{}\\
Let $\Gamma ^{12}$ be a bi-partite state over $\gH_1 \otimes \gH_2$, for two finite-dimensional Hilbert spaces of dimensions $d_1$ and $d_2$ respectively. Then 
$$ I (\gH_1 : \gH_2)_{\Gamma ^{12}} \leq 2 \min \left(\log d_1, \log d_2 \right).$$
\end{lemma}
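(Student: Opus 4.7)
The plan is to combine two standard facts about the von Neumann entropy: the trivial upper bound $S(\rho) \le \log d$ for any state on a $d$-dimensional Hilbert space, and the Araki--Lieb triangle inequality
$$ S(\Gamma^{12}) \ge \left| S(\Gamma^1) - S(\Gamma^2) \right|. $$
Both are classical consequences of the spectral theorem and of a purification argument respectively, and I would simply invoke them rather than reproving them.

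Starting from the definition in Definition~\ref{def:mutual info}, write
$$ I(\gH_1 : \gH_2)_{\Gamma^{12}} = S(\Gamma^1) + S(\Gamma^2) - S(\Gamma^{12}) = S(\Gamma^2) + \bigl( S(\Gamma^1) - S(\Gamma^{12}) \bigr). $$
The Araki--Lieb inequality gives $S(\Gamma^1) - S(\Gamma^{12}) \le S(\Gamma^2)$, so the right-hand side is at most $2 S(\Gamma^2) \le 2 \log d_2$. Exchanging the roles of $\gH_1$ and $\gH_2$ produces the analogous bound $2 \log d_1$, and taking the minimum yields the claim.

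There is no real obstacle here: the content is entirely encapsulated in the Araki--Lieb triangle inequality. The only point that deserves care is to keep track of which index the trivial entropy bound gets applied to, so that one correctly obtains the minimum rather than, say, the sum of $\log d_1 + \log d_2$. If one preferred to avoid invoking Araki--Lieb explicitly, one could alternatively purify $\Gamma^{12}$ into a tripartite pure state $\Gamma^{123}$ on $\gH_1 \otimes \gH_2 \otimes \gH_3$, use $S(\Gamma^{12}) = S(\Gamma^3)$, and then bound $S(\Gamma^1) \le \log d_1$ together with $S(\Gamma^3) \ge S(\Gamma^2) - S(\Gamma^1)$ via subadditivity on systems $2,3$; but this reintroduces exactly the same argument.
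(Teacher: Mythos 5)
Your argument is correct and is essentially identical to the paper's own proof: both combine the Araki--Lieb triangle inequality $S(\Gamma^{12}) \geq |S(\Gamma^1) - S(\Gamma^2)|$ with the bound $S(\rho) \leq \log d$ to get $I(\gH_1:\gH_2)_{\Gamma^{12}} \leq 2\min(S(\Gamma^1), S(\Gamma^2)) \leq 2\min(\log d_1, \log d_2)$. Nothing to add.
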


\begin{proof}
Recall the Araki-Lieb inequality (proved by purification, Schmidt decomposition of pure states and subaddivity of  entropy~\cite{Lieb-75}) 
$$ S (\Gamma ^{12}) \geq \left| S (\Gamma ^1) - S (\Gamma ^2) \right|.$$
Inserting this in the (second) definition given above we have 
$$ I (\gH_1 : \gH_2)_{\Gamma ^{12}} =  S (\Gamma ^1) + S (\Gamma ^2)  - S (\Gamma ^{12}) \leq 2 \min \left( S (\Gamma ^1), S (\Gamma ^2) \right)$$
and the result follows, for the maximal entropy of a state in dimension $d$ is $\log d$.
\end{proof}

We also have the useful 

\begin{lemma}[\textbf{Mutual informations, bipartite to multipartite}]\label{lem:bi to multi}\mbox{}\\
 With the notation of the above definition
 \begin{equation}\label{eq:bi to multi}
 I (\gH_1 : \ldots :  \gH_k)_{\Gamma ^{1\ldots k}} = \sum_{j=2} ^k I(\gH_1, \ldots, \gH_{j-1} : \gH_j)_{\Gamma ^{1\ldots j}}.  
 \end{equation}
\end{lemma}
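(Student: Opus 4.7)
The plan is to verify the identity directly from the definitions given in Definition~\ref{def:mutual info}, using the fact that both sides are expressible as alternating sums of von Neumann entropies of reduced density matrices, and then exhibit a telescoping cancellation.

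First I would expand each summand on the right-hand side according to the second equality in the definition of bi-partite mutual information:
\begin{equation*}
I(\gH_1,\ldots,\gH_{j-1}:\gH_j)_{\Gamma^{1\ldots j}} = S(\Gamma^{1\ldots j-1}) + S(\Gamma^j) - S(\Gamma^{1\ldots j}).
\end{equation*}
Summing this over $j=2,\ldots,k$ and grouping by type of term gives
\begin{equation*}
\sum_{j=2}^k I(\gH_1,\ldots,\gH_{j-1}:\gH_j)_{\Gamma^{1\ldots j}} = \sum_{j=2}^k S(\Gamma^{1\ldots j-1}) + \sum_{j=2}^k S(\Gamma^j) - \sum_{j=2}^k S(\Gamma^{1\ldots j}).
\end{equation*}

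Next I would observe that the first and third sums telescope against each other. Reindexing the first sum with $i = j-1$, it equals $\sum_{i=1}^{k-1} S(\Gamma^{1\ldots i})$, while the third sum equals $\sum_{i=2}^{k} S(\Gamma^{1\ldots i})$. Their difference is thus $S(\Gamma^1) - S(\Gamma^{1\ldots k})$. Combining with the middle sum (to which I append the $j=1$ term explicitly) yields
\begin{equation*}
\sum_{j=2}^k I(\gH_1,\ldots,\gH_{j-1}:\gH_j)_{\Gamma^{1\ldots j}} = \sum_{j=1}^k S(\Gamma^j) - S(\Gamma^{1\ldots k}),
\end{equation*}
which is precisely $I(\gH_1:\ldots:\gH_k)_{\Gamma^{1\ldots k}}$ by Definition~\ref{def:mutual info}.

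There is essentially no obstacle here: the identity is purely algebraic once one uses the entropic expression for the mutual informations. The only point to be careful about is the index bookkeeping in the telescoping, and the implicit use of the fact that $\Gamma^{1\ldots j-1}$ obtained by partial trace from $\Gamma^{1\ldots j}$ coincides with the one obtained from $\Gamma^{1\ldots k}$, which is automatic by the consistency of partial traces.
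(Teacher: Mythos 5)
Your proof is correct and is essentially the paper's argument: the paper states the one-step identity $I(\gH_1:\ldots:\gH_k) = I(\gH_1,\ldots,\gH_{k-1}:\gH_k) + I(\gH_1:\ldots:\gH_{k-1})$ and iterates it, which is exactly the telescoping sum of entropies you write out explicitly. The index bookkeeping and the consistency of partial traces are handled correctly.
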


\begin{proof}
The definitions easily yield
$$ I (\gH_1 : \ldots :  \gH_k)_{\Gamma ^{1\ldots k}} = I (\gH_1, \ldots \gH_{k-1} :  \gH_k)_{\Gamma ^{1\ldots k}}  + I (\gH_1 : \ldots : \gH_{k-1} )_{\Gamma ^{1\ldots k-1}} $$
and it suffices to iterate this relation.
\end{proof}

Next we need the 

\begin{definition}[\textbf{Conditional mutual information}]\label{def:cond mut inf}\mbox{}\\
For a quantum state $\Gamma = \Gamma ^{123}$ over a three-fold tensor product $\gH_1 \otimes \gH_2 \otimes \gH_3$, define the \emph{conditional mutual information} 
\begin{align*}
I (\gH_1 : \gH_{2} | \gH_3)_\Gamma &= I (\gH_1 : \gH_2, \gH_3)_{\Gamma^{123}} - I (\gH_1 : \gH_3)_{\Gamma ^{13}}\\
&= \cH (\Gamma ^{123}, \Gamma^1 \otimes \Gamma ^{23}) - \cH (\Gamma^{13},\Gamma ^1 \otimes \Gamma ^3).
\end{align*}
\end{definition}

\begin{remark}[Conditional mutual information]\label{rem:cond mutual info}\mbox{}\\
It turns out that also the conditional mutual information is positive. This is not quite trivial and in fact follows from strong subaddivity of quantum entropy~\cite{LieRus-73a,LieRus-73b}.\hfill$\diamond$
\end{remark}

A first lemma bearing on the conditional mutual information is 

\begin{lemma}[\textbf{Conditional mutual information of partly measured states}]\label{lem:cond mut class}\mbox{}\\
Let $\Gamma ^{123}$ be a tri-partite state of the form
$$ \Gamma ^{123} = \sum_j p_j \Gamma^{12}_j \otimes |e_j \rangle \langle e_j|$$
for positive numbers $p_j$ summing to $1$, bi-partite states $\Gamma^{12}_j$ and an orthonormal basis $(e_j)$. Then 
\begin{equation}\label{eq:cond mut class}
I (\gH_1 : \gH_{2} | \gH_3)_\Gamma = \sum_j p_j I(\gH^1 : \gH ^2)_{\Gamma_j^{12}}. 
\end{equation}
\end{lemma}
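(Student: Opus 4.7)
The plan is to unfold the conditional mutual information into a combination of von Neumann entropies and then exploit the ``classical'' (block-diagonal) structure of $\Gamma^{123}$ on the third factor, which makes all four entropies split into a common Shannon piece plus averages of entropies of the blocks.

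First I would rewrite the conditional mutual information using its definition plus the identity $I(\mathfrak{H}_a:\mathfrak{H}_b)_{\rho^{ab}} = S(\rho^a)+S(\rho^b)-S(\rho^{ab})$. A short calculation gives
\begin{equation*}
I(\mathfrak{H}_1:\mathfrak{H}_2\,|\,\mathfrak{H}_3)_\Gamma
= S(\Gamma^{13}) + S(\Gamma^{23}) - S(\Gamma^{123}) - S(\Gamma^{3}),
\end{equation*}
so the task reduces to computing these four entropies from the explicit form of $\Gamma^{123}$.

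Next I would compute the marginals. Since $(e_j)$ is orthonormal and each $\Gamma^{12}_j$ has trace one, tracing out appropriate factors yields
\begin{equation*}
\Gamma^{3} = \sum_j p_j |e_j\rangle\langle e_j|,\quad
\Gamma^{13} = \sum_j p_j \,\Gamma^{1}_j \otimes |e_j\rangle\langle e_j|,\quad
\Gamma^{23} = \sum_j p_j \,\Gamma^{2}_j \otimes |e_j\rangle\langle e_j|,
\end{equation*}
where $\Gamma^{1}_j = \tr_2 \Gamma^{12}_j$ and $\Gamma^{2}_j = \tr_1 \Gamma^{12}_j$. The key observation is the direct-sum decomposition principle: any state of the form $\sigma = \sum_j p_j \rho_j \otimes |e_j\rangle\langle e_j|$ is block-diagonal with eigenvalues $\{p_j \lambda_{j,k}\}$, where $\lambda_{j,k}$ are the eigenvalues of $\rho_j$, so
\begin{equation*}
S(\sigma) = H(p) + \sum_j p_j S(\rho_j), \qquad H(p) := -\sum_j p_j \log p_j.
\end{equation*}

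Finally I would substitute: each of $S(\Gamma^{3}), S(\Gamma^{13}), S(\Gamma^{23}), S(\Gamma^{123})$ contributes one copy of $H(p)$, and these four copies cancel (two with plus sign, two with minus sign). What remains is exactly
\begin{equation*}
\sum_j p_j \bigl[ S(\Gamma^{1}_j) + S(\Gamma^{2}_j) - S(\Gamma^{12}_j) \bigr]
= \sum_j p_j\, I(\mathfrak{H}^1:\mathfrak{H}^2)_{\Gamma^{12}_j},
\end{equation*}
which is the claim. There is essentially no obstacle here: the only slightly delicate point is the entropy-of-block-diagonal-state identity, but it follows from direct diagonalisation and the fact that the supports of $\rho_j \otimes |e_j\rangle\langle e_j|$ for different $j$ are mutually orthogonal because the $|e_j\rangle$ are.
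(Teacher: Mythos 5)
Your proof is correct and follows essentially the same route as the paper: both reduce the claim to the identity $I(\gH_1:\gH_2|\gH_3)_\Gamma = S(\Gamma^{13})+S(\Gamma^{23})-S(\Gamma^{123})-S(\Gamma^{3})$ and then evaluate the four entropies using the block-diagonal structure in the third factor. Your explicit isolation of the identity $S(\sigma)=H(p)+\sum_j p_j S(\rho_j)$ and the cancellation of the four $H(p)$ terms is simply a cleaner packaging of the ``straightforward calculation'' the paper performs after diagonalizing each $\Gamma^{12}_j$.
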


\begin{proof}
Diagonalize $\Gamma^{12}_j$ and then Schmidt-decompose its eigenvectors. This gives 
$$
\Gamma ^{123} = \sum_{j,k,l,m} p_j q_{j,k} \sqrt{r_{j,k,l}} \sqrt{r_{j,k,m}} |a_{j,k,l} \rangle \langle a_{j,k,m} | \otimes |b_{j,k,l} \rangle \langle b_{j,k,m} | \otimes |e_j \rangle \langle e_j|. 
$$
with orthonormal basis $(a_{j,k,l})_l$ and $(b_{j,k,l})_l$ and positive numbers satisfying
$$  \sum_k q_{j,k} = 1, \quad \sum_{l} r_{j,k,l} = 1.$$
Inserting this in the definitions 
\begin{align*}
I (\gH_1 : \gH_{2} | \gH_3)_\Gamma &= - S (\Gamma ^{123}) + S (\Gamma^{23}) - S (\Gamma^3) + S (\Gamma ^{13}) \\
\sum_j p_j I(\gH^1 : \gH ^2)_{\Gamma^{12}_j}&= \sum_j p_j \left( S (\Gamma^1_j) + S (\Gamma^2_j) - S (\Gamma^{12}_j)\right), 
\end{align*}
the proof is a straightforward calculation.
\end{proof}

The previous lemma morally justifies an extension of Definition~\ref{def:cond mut inf}:

\begin{definition}[\textbf{Multi-partite conditional mutual information}]\label{def:cond mut inf bis}\mbox{}\\
Let $\Gamma ^{1\ldots k+1}$ be a $k+1$-particle state of the form 
$$ \Gamma ^{1\ldots k+1} = \sum_j p_j \Gamma_j ^{1 \ldots k} \otimes |e_j \rangle \langle e_j|$$
for positive numbers $p_j$ summing to $1$, $k$-partite states $\Gamma^{1\ldots k}_j$ and an orthonormal basis $(e_j)$. By definition 
$$ I (\gH_1 : \ldots : \gH_k | \gH_{k+1})_{\Gamma ^{1\ldots k+1}} = \sum_j p_j I (\gH_1 : \ldots : \gH_k )_{\Gamma_j ^{1\ldots k}}.$$
\end{definition}

Next we state a crucial link between bipartite mutual informations and conditional mutual informations. 
\begin{lemma}[\textbf{Chain rule for mutual informations}]\label{lem:chain rule}\mbox{}\\
Let $\Gamma = \Gamma ^{1\ldots N}$ be a $N$-partite state. 
\begin{equation}\label{eq:chain rule}
I (\gH_1, \ldots, \gH_{k-1} : \gH_{k},\ldots,\gH_N)_\Gamma = \sum_{j=k} ^N I(\gH_1,\ldots,\gH_{k-1} : \gH_j | \gH_{j+1} \otimes \ldots \otimes \gH_N )_{\Gamma ^{1 \ldots k-1 j \ldots N}}.  
\end{equation}
\end{lemma}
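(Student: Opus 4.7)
The plan is to prove the identity by a telescoping application of the one-step chain rule that is already built into Definition~\ref{def:cond mut inf}. To streamline notation I would first abbreviate $A := \gH_1 \otimes \cdots \otimes \gH_{k-1}$ and, for $k \le j \le N+1$, set $B_j := \gH_j \otimes \cdots \otimes \gH_N$, with $B_{N+1}$ interpreted as the trivial one-dimensional space. The claim then reads
\begin{equation*}
I(A : B_k)_\Gamma \;=\; \sum_{j=k}^{N} I\bigl(A : \gH_j \,|\, B_{j+1}\bigr)_\Gamma,
\end{equation*}
where each mutual information is understood on the appropriate marginal of $\Gamma$.

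The main step is then just to read off Definition~\ref{def:cond mut inf} with ``system~1'' $=A$, ``system~2'' $=\gH_j$ and ``system~3'' $=B_{j+1}$: since $\gH_j \otimes B_{j+1} = B_j$, this yields, for every $j \in \{k,\ldots,N\}$,
\begin{equation*}
I\bigl(A : \gH_j \,|\, B_{j+1}\bigr)_\Gamma \;=\; I(A : B_j)_\Gamma \;-\; I(A : B_{j+1})_\Gamma.
\end{equation*}
Summing this over $j = k, k+1, \ldots, N$ makes the right-hand side telescope to $I(A : B_k)_\Gamma - I(A : B_{N+1})_\Gamma$, and the latter term vanishes because the ``second system'' is trivial. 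This is exactly the statement of the lemma.

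No new inequality is used; the only point of care is the boundary term $j = N$, for which $B_{j+1}$ is the empty tensor product and one must interpret $I(A : \gH_N \,|\, \emptyset)$ as the ordinary bipartite $I(A : \gH_N)$. Reading Definition~\ref{def:cond mut inf} with the natural convention $\Gamma^3 = 1 \in \C$ (so that $\Gamma^{13} = \Gamma^1$, $S(\Gamma^3) = 0$, and $I(A : \emptyset) = \cH(\Gamma^A, \Gamma^A) = 0$) makes this identification automatic. Once this bookkeeping is fixed, the proof is a one-line telescoping computation, so I do not anticipate any serious obstacle; the substantive content is entirely packaged into the definition of conditional mutual information.
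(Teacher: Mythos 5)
Your proof is correct and is essentially the same as the paper's: both reduce each conditional mutual information to the difference $I(A:B_j)-I(A:B_{j+1})$ via Definition~\ref{def:cond mut inf} and telescope, the only cosmetic difference being that the paper expands into von Neumann entropies while you telescope directly at the level of mutual informations. Your explicit handling of the boundary term $j=N$ (where the conditioning system is trivial) matches the paper's remark that the last term is ``a mutual information with no conditioning.''
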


Henceforth the notation is that
\begin{align*}
 I (\gH_1, \ldots, \gH_{k-1} : \gH_{k},\ldots,\gH_N)_\Gamma &= I (\gH_1 \otimes \ldots \otimes \gH_{k-1} : \gH_{k},\ldots,\gH_N)_\Gamma\\
I (\gH_1,\ldots,\gH_{k-1} : \gH_j | \gH_{j+1} \otimes \ldots \otimes \gH_N )_{\Gamma ^{1 \ldots k-1 j \ldots N}} &= I (\gH_1\otimes \ldots \otimes \gH_{k-1} : \gH_j | \gH_{j+1} \otimes \ldots \otimes \gH_N )_{\Gamma ^{1 \ldots k-1 j \ldots N}}
\end{align*}
and similar conventions.

\begin{proof}
Call $M = N-k +1 $, $A = \gH_1 \otimes \ldots \otimes \gH_{k-1}$, $B_j = \gH_{k+j-1}.$ A reformulation is then 
$$ I (A : B_1, \ldots, B_M) = \sum_{j=1} ^M I (A : B_j | B_{j+1} \otimes \ldots \otimes B_M).$$
The left-hand side is equal to 
$$ \mathrm{LHS} = S (\Gamma ^A) + S (\Gamma ^{B_1 \ldots B_M}) - S (\Gamma ^{AB_1\ldots B_M}).$$
But the $M-$th term of the left-hand side is just a mutual information with no conditioning
$$ \mathrm{RHS}_M =  S (\Gamma ^A) + S (\Gamma ^{B_M}) - S (\Gamma ^{AB_M}).$$
The other terms of the right-hand side are, for $k < M,$
\begin{align*}
 \mathrm{RHS}_k &= \cH (\Gamma ^{A B_k \ldots B_M}, \Gamma ^A \otimes \Gamma^{B_k \ldots B_M}) - \cH (\Gamma^{A B_{k+1}\ldots B_M}, \Gamma ^A \otimes \Gamma ^{B_{k+1} \ldots B_M}) \\
 &= S (\Gamma ^{B_k \ldots B_M}) - S (\Gamma ^{AB_k\ldots B_M}) - (S \Gamma ^{B_{k+1} \ldots B_M}) - S (\Gamma ^{AB_{k+1}\ldots B_M})
\end{align*}
and the result clearly follows.
\end{proof}

\subsection{Proof of the main estimate}\label{sec:BraHar proof}  This is the proof of~\cite{BraHar-12}, expanded so as to become more accessible.

\begin{proof}[Proof of Theorem~\ref{thm:deF inf}]
From now on we occasionally label the copies of the one-body Hilbert space, for we will sometimes deal with states that are not fully symmetric. We denote $\bL_k = \Lambda_1 \otimes \ldots \otimes \Lambda_k$ a measurement as in the statement. We also let $\cE$ be a measurement on $\gH^{\otimes(N-k)}$ and the associated $(\Gamma_\mu)$'s be defined as in Lemma~\ref{lem:decomp mu}.

\smallskip

\noindent\textbf{Step 1.} We split multipartite informations into bipartite ones using Lemma~\ref{lem:bi to multi}: 
$$I(\gH: \ldots: \gH)_{\bL_k \Gamma_{\mu} ^{(k)}} = \sum_{j=2} ^k I (\gH_1,\ldots,\gH_{j-1} : \gH_j)_{\bL_j \Gamma_{\mu} ^{(j)}}.$$
Then, by monotony of the relative entropy we know~\cite{OhyPet-93} that the mutual information decreases under local measurements, for they are trace-preserving completely positive\footnote{Positive suffices actually~\cite{MulRee-17}.} maps. Thus  
$$I(\gH: \ldots: \gH)_{\bL_k \Gamma_{\mu} ^{(k)}} \leq \sum_{j=2} ^k I (\gH_1,\ldots,\gH_{j-1} : \gH_j)_{\Lambda_j \Gamma_{\mu} ^{(j)}}$$
where we abuse notation by denoting 
$$ \Lambda_j \Gamma_{\mu} ^{(j)} = \1 \otimes \dots \otimes \Lambda_j \otimes \ldots \otimes \1 \: \Gamma_{\mu} ^{(j)}.$$
Next, using symmetry and monotony of the relative entropy under partial traces
$$
I(\gH: \ldots: \gH)_{\bL_k \Gamma_{\mu} ^{(k)}} \leq (k-1) I (\gH_1,\ldots,\gH_{k-1} : \gH_k)_{\Lambda_k \Gamma_{\mu} ^{(k)}}. 
$$
Multiplying by $p_\mu$, summing over $\mu$ and using Lemma~\ref{lem:cond mut class} this yields 
\begin{equation}\label{eq:step 1}
\sum_\mu p_\mu I(\gH: \ldots: \gH)_{\bL_k \Gamma_{\mu} ^{(k)}} \leq (k-1) I (\gH_1,\ldots,\gH_{k-1} : \gH_k | \gH_{k+1},\ldots , \gH_N)_{\Lambda_k \otimes \cE \, \Gamma}
\end{equation}

\medskip

\noindent \textbf{Step 2.} We claim that 
\begin{multline}\label{eq:step 2}
(N-k+1) \min_{\cE} \max_{\Lambda_k}  I (\gH_1,\ldots,\gH_{k-1} : \gH_k | \gH_{k+1},\ldots , \gH_N)_{\Lambda_k \otimes \cE \, \Gamma} \\ 
\leq \max_{\Lambda_k, \ldots, \Lambda_{N}} I(\gH_1,\ldots,\gH_{k-1} : \gH_{k}, \ldots, \gH_N)_{\nu}
\end{multline}
where 
$$\nu = \1^{\otimes k-1} \otimes \Lambda_k \otimes \Lambda_{k+1} \otimes \ldots \otimes \Lambda_N (\Gamma).$$
This is achieved by a particular choice of the measurements $\Lambda_{k+1},\ldots,\Lambda_N$ (in particular, the minimum over all measurements on $N-k$ systems is bounded above using tensorized measurements). We start from the right-hand side of~\eqref{eq:step 2} and use the chain rule, Lemma~\ref{lem:chain rule}:
\begin{equation}\label{eq:use chain}
 I(\gH_1,\ldots,\gH_{k-1} : \gH_{k}, \ldots, \gH_N)_{\nu} = \sum_{j=k} ^N I(\gH_1,\ldots,\gH_{k-1} : \gH_{j} | \gH_{j+1} \ldots, \gH_N)_{\nu_j} 
\end{equation}
where in the right-hand side
$$ \nu_j = \nu ^{1\ldots k-1 j \ldots N}.$$
Observe that, as per Lemma~\ref{lem:part meas}, the $j$-th term in the right-hand side of~\eqref{eq:use chain} does not depend on the measurements $\Lambda_k,\ldots,\Lambda_{j-1}$. We then choose $\Lambda_N$ to maximize the $N$-th term, $\Lambda_{N-1}$ to maximize the $N-1$-th term given $\Lambda_N$, etc ..., $\Lambda_{j}$ to maximize the $j$-th term given the previous choices of $\Lambda_N,\ldots,\Lambda_{j+1}$, and continue this way iteratively. Then we certainly have, for each term,  
$$ I(\gH_1,\ldots,\gH_{k-1} : \gH_{j} | \gH_{j+1} \ldots, \gH_N)_{\nu_j} \geq \min_{\Lambda_{j+1},\ldots,\Lambda_N} \max_{\Lambda_j} I(\gH_1,\ldots,\gH_{k-1} : \gH_{j} | \gH_{j+1} \ldots, \gH_N)_{\nu_j} $$
and using Lemma~\ref{lem:part meas} again we also have 
\begin{multline*}
 I(\gH_1,\ldots,\gH_{k-1} : \gH_{j} | \gH_{j+1} \ldots, \gH_N)_{\nu_j} \\ \geq \min_{\Lambda_k,\ldots,\Lambda_{j-1},\Lambda_{j+1},\ldots,\Lambda_N} \max_{\Lambda_j} I(\gH_1,\ldots,\gH_{k-1} : \gH_{j} | \gH_k \ldots \gH_{j-1} \gH_{j+1} \ldots, \gH_N)_{\nu}. 
\end{multline*}
By symmetry of $\Gamma$ then 
$$ I(\gH_1,\ldots,\gH_{k-1} : \gH_{j} | \gH_{j+1} \ldots, \gH_N)_{\nu_j} \geq \min_{\Lambda_{k+1},\ldots,\Lambda_N} \max_{\Lambda_k} I(\gH_1,\ldots,\gH_{k-1} : \gH_{k} | \gH_{k+1} \ldots, \gH_N)_{\nu}$$
and since this corresponds to choosing a particular class of measurement $\cE$ 
$$ I(\gH_1,\ldots,\gH_{k-1} : \gH_{j} | \gH_{j+1} \ldots, \gH_N)_{\nu_j} \geq \min_{\cE} \max_{\Lambda_k} I(\gH_1,\ldots,\gH_{k-1} : \gH_{k} | \gH_{k+1} \ldots, \gH_N)_{\Lambda_k \otimes \cE \, \Gamma}.$$
Thus, for the particular choice of measurements we made,  
\begin{multline*}
(N-k+1) \min_{\cE} \max_{\Lambda_k}  I (\gH_1,\ldots,\gH_{k-1} : \gH_k | \gH_{k+1},\ldots , \gH_N)_{\Lambda_k \otimes \cE \, \Gamma} \\ \leq \sum_{j=k} ^N I(\gH_1,\ldots,\gH_{k-1} : \gH_{j} | \gH_{j+1} \ldots, \gH_N)_{\nu_j}
\end{multline*}
and Claim~\eqref{eq:step 2} is proved upon using~\eqref{eq:use chain}. 

\medskip 

\noindent\textbf{Conclusion.} Lemma~\ref{lem:bound info} yields 
$$ \max_{\Lambda_k, \ldots \Lambda_{N}} I(\gH_1,\ldots,\gH_{k-1} : \gH_{k}, \ldots, \gH_N)_{\nu} \leq 2 \log \mathrm{dim} (\gH^{\otimes k-1}) = 2 (k-1) \log d.$$
Insert this in~\eqref{eq:step 2}, use~\eqref{eq:step 1} and the result is proved. 
\end{proof}

Finally, we explain how Theorem~\ref{thm:BranHar} follows from Theorem~\ref{thm:deF inf}:

\begin{proof}
Denote 
$$\bL_k = \Lambda_1 \otimes \ldots \otimes \Lambda_k$$
for brevity. Using Lemma~\ref{lem:decomp mu}, convexity and then Pinsker's inequality (see~\cite{CarLie-14} or~\cite[Section~5.4]{Hayashi-06})
\begin{align}\label{eq:final}
\norm{\bL_k \left(\Gamma ^{(k)}  - \Gammat ^{(k)}\right)}^2_{\gS ^1 (\gH^k)} &= \norm{\sum_\mu p_\mu \bL_k  \left(\Gamma_\mu ^{(k)}  - \left( \Gamma_\mu ^{(1)}\right) ^{\otimes k}\right) }^2_{\gS ^1 (\gH^k)}\nonumber \\
&\leq \sum_\mu  p_\mu \norm{\bL_k \left(\Gamma_\mu ^{(k)}  - \left( \Gamma_\mu ^{(1)} \right)^{\otimes k}\right) }^2_{\gS ^1 (\gH^k)} \nonumber \\
&\leq 2 \sum_\mu  p_\mu \cH \left( \bL_k \Gamma_\mu ^{(k)}, \bL_k \left( \Gamma_\mu ^{(1)}\right) ^{\otimes k}\right) 
\end{align}
%
The last term in~\eqref{eq:final} is nothing but the multipartite mutual information refered to in Theorem~\ref{thm:deF inf}, and thus the proof is complete.
\end{proof}

%

\end{document}